\newcommand{\simtset}{\{\simt_1,\ldots,\simt_N\}}
\renewcommand{\ge}{\geq}
\renewcommand{\R}{\mathbb R}
\newcommand{\N}{\mathbb N}
\newcommand{\env}{\ensuremath{E}\xspace}  
\newcommand{\ghull}{\ensuremath{K}\xspace}  
\renewcommand{\diam}{\mathrm{diam}\,}
\renewcommand{\emptyset}{\varnothing}
\newcommand{\inn}[1]{\mathrm{int}\,#1}   
\newcommand{\bd}{\mathrm{bd}\,} 
\newcommand{\eps}{\ensuremath{\varepsilon}\xspace}
\newcommand{\extr}{\operatorname{extr}} 
\newcommand{\linenopar}{}  
  \let\oldmarginpar\marginpar
  \renewcommand\marginpar[1]
\numberwithin{equation}{section} \numberwithin{theorem}{section}
\begin{document}

\title{Geometry of canonical self-similar tilings}

\author{Erin P. J. Pearse}
\address{\scriptsize Department of Mathematics, 25L MacLean Hall, University of Iowa, Iowa City, IA 52246} \email{erin-pearse@uiowa.edu}

\author{Steffen Winter}
\address{\scriptsize Institut f\"{u}r Algebra und Geometrie, Universit\"{a}t Karlsruhe (TH), 76131 Karlsruhe, Germany} \email{winter@math.uni-karlsruhe.de}

\thanks{The work of EPJP was partially supported by the University of Iowa Department of Mathematics NSF VIGRE grant DMS-0602242. The work of SW was partially supported by Cornell University and a grant from the German Academic Exchange Service (DAAD)}

\date{\today}

\subjclass[2000]{Primary: 28A80, 28A75, 52A20, 52C22. Secondary: 52A38, 53C65, 51M25, 49Q15, 60K05, 54F45}

\keywords{Iterated function system, parallel set, fractal, complex dimensions, zeta function, tube formula, Steiner formula, renewal theorem, convex ring, inradius, Euler characteristic, Euler number, self-affine, self-similar, tiling, curvature measure, generating function, fractal string.}

\begin{abstract}
  We give several different geometric characterizations of the situation in which the parallel set $F_\varepsilon$ of a self-similar set $F$ can be described by the inner $\varepsilon$-parallel set $T_{-\varepsilon}$ of the associated canonical tiling $\mathcal T$, in the sense of \cite{SST}. For example, $F_\varepsilon=T_{-\varepsilon} \cup C_\varepsilon$ if and only if the boundary of the convex hull $C$ of $F$ is a subset of $F$, or if the boundary of $E$, the unbounded portion of the complement of $F$, is the boundary of a convex set.  In the characterized situation, the tiling allows one to obtain a tube formula for $F$, i.e., an expression for the volume of $F_\varepsilon$ as a function of $\varepsilon$. On the way, we clarify some geometric properties of canonical tilings.

 Motivated by the search for tube formulas, we give a generalization of the tiling construction which applies to all self-affine sets $F$ having empty interior and satisfying the open set condition. We also characterize the relation between the parallel sets of $F$ and these tilings.

\end{abstract}

\maketitle
\setcounter{tocdepth}{1}
{\small \tableofcontents}


\section{Introduction}

As the basic object of our study is a self-affine system and its attractor, the associated self-affine set, we begin by defining these terms.

\begin{defn}\label{def:self-affine-system}
  For $j=1,\dots,N$, let $\simt_j: \bRd \to \bRd$ be an affine contraction whose eigenvalues \gl all \sat $0 < \gl < 1$. Then $\{\simt_1,\dots,\simt_N\}$ is a \emph{self-affine iterated function system}.
\end{defn}

\begin{defn}\label{def:self-similar-system}
  A \emph{self-similar system} is a self-affine system for which each mapping is a similitude, i.e.,
  \linenopar
  \begin{align}\label{eqn:def:similitude}
    \simt_j(x) := r_j A_j x + a_j,
  \end{align}
  where for $j=1,\dots,N$, we have $0 < r_j < 1$, $a_j \in \bRd$, and $A_j \in O(d)$, the orthogonal group of $d$-\dimnl Euclidean space \bRd. The numbers $r_j$ are referred to as the \emph{scaling ratios} of $\simtset$.
\end{defn}

Let \attr be the self-affine set generated by the mappings $\simt_1,\dots, \simt_N$, i.e., the unique (nonempty and compact) set satisfying $\simt(F)=F$ where $\simt$ is the set mapping
\linenopar
\begin{align}\label{eqn:def:simt}
  \simt := \bigcup_{j=1}^N \simt_j.
\end{align}
The existence and uniqueness of the set \attr is ensured by the classic results of Hutchinson in \cite{Hut}. It is shown in \cite{SST} that when a self-affine system satisfies the tileset condition (TSC) and the nontriviality condition (given here in Definitions~\ref{def:tileset-condition} and \ref{def:nontriviality-condition}, respectively), then there is a natural tiling of the convex hull $\hull=[\attr]$. That is, $\simtset$ generates a decomposition of \hull into sets $\tiling=\{\tile^n:n \in\N\}$, in the sense that
\linenopar
\begin{align*}
  \cj{\bigcup\nolimits_{n=1}^\iy \tile^n} = \hull,
  \q\text{and}\q \tile^n \cap \tile^m = \es, \text{ for } n \neq m,
\end{align*}
cf.~Definition~\ref{def:opentiling}. One of our main objectives in this paper is to explore the consequences of these two conditions and characterize some properties of the tilings. In particular, we clarify the relationship between the tileset condition as defined in \cite{SST} and the open set condition, in fact, the latter is implied by the former, cf.~Proposition~\ref{prop:TSC=>OSC}. 
%
The nontriviality condition forbids self-similar sets with convex attractors, like the square or interval. Additionally, we show in Proposition~\ref{thm:nontriviality-means-convexity} that the nontriviality condition ensures the existence of tiles in the tiling construction.  
Under TSC, nontriviality is also equivalent to \attr having empty interior, see Proposition~\ref{thm:nontriv-implies-empty-interior}.  We discuss the boundary of the tiling and its Hausdorff dimension in Proposition~\ref{prop:dim-bd-T} and Remark~\ref{rem:general-tiling-boundary-dimension}.

In \cite{SST}, it was noted that the tiling \tiling constitutes the bulk of the nontrivial portion of the complement of \attr, and consequently, that one may be able to study the \eps-parallel sets (or \eps-neighborhoods) of \attr by considering the inner \eps-neighborhoods of the tiling. By the \eps-parallel set $A_\eps$ of a set $A \ci \bRd$ we mean all points not in the interior of $A$ but with distance at most \eps to $A$. (Note that our usage of $A_\eps$ differs from the usual one, where the interior points of $A$ are included, but it is more convenient for our purposes.) Similarly, the inner $\eps$-parallel set $A_{-\eps}$ consists of the points of the closure of $A$ within distance $\eps$ of $\bd A$, see Definition~\ref{def:eps-parallel-set} for details. We determine the conditions under which the tiling allows an (almost disjoint, cf. \eqref{eqn:eps-parallel-set-decomp}) decomposition of $\attr_\eps$ of the following form:
\begin{equation}\label{eqn:intro-parallel-decomp}
F_\eps = T_{-\eps}\cup C_\eps.
\end{equation}
Here $T:=\bigcup R^n$ denotes the union of the tiles of \tiling.
In Theorems~\ref{thm:parallel-set-compatibility} and~\ref{thm:relating-ext-bd-to-compatibility-thm}, we give eight equivalent conditions which characterize this state of affairs; these results will be collectively referred to as the Compatibility Theorem.

In \S\ref{sec:Generalization-of-the-tiling-construction} we generalize the tiling construction introduced in \cite{SST} and discussed in earlier sections of the present paper. Specifically, we replace the tileset condition 
with the less restrictive open set condition (see Definition~\ref{def:OSC}) and replace the convex hull with an arbitrary feasible open set.
Finally, in \S\ref{sec:Generalizing-the-compatibility-theorem} we extend the Compatibility Theorem to the generalized self-similar tilings developed in \S\ref{sec:Generalization-of-the-tiling-construction}. For instance, the tiling generated from a feasible set $O$ is compatible if and only if $\bd \cj O \ci \attr$.

Compatibility allows one to employ the tiling to obtain a tube formula for \attr and this is the driving motivation for the current paper. By a \emph{tube formula} of a set $A\subset\R^d$, we mean an expression which gives the Lebesgue volume $V(A_\eps)$ of $A_\eps$ as a function of $\eps$. Such objects are of considerable interest in spectral theory and geometry; see \cite{FractalTubeSurvey} and \cite{TFCD}, as well as the more general references \cite{We}, \cite{Gr} and \cite{Schn}. In convex geometry, tube formulas are better known as Steiner formulas:
\begin{equation}\label{eqn:Steiner-formula}
  V(A_\eps)=\sum_{k=0}^{d-1} \eps^{d-k}\kappa_{d-k} C_k(A)
\end{equation}
For compact convex subsets $A$ of $\R^d$, $V(A_\eps)$ is a polynomial in \eps and the coefficients $C_k(A)$ are called \emph{total curvatures} or \emph{intrinsic volumes}; these are important geometric invariants of the set $A$ and are related to the integrals of mean curvature provided the boundary of $A$ is sufficiently smooth.
A polynomial expansion similar to \eqref{eqn:Steiner-formula} is known for sets of \emph{positive reach} \cite{Fed}.  Also for polyconvex sets (finite unions of convex sets) und certain unions of sets with positive reach polynomial expansions are known. However, in these latter cases, the polynomial describes a ``weighted'' parallel volume which counts the points in the parallel sets with different multiplicities given by an index function; cf.~\cite{Schn, Za2}.

For more singular sets like fractals one cannot expect such polynomial behavior. Tube formulas for subsets $A$ of $\R$ have been extensively studied, see \cite{FGCD} and the references therein, and they have been related to the theory of complex dimensions. Here the tube formulas typically take the form of an infinite sum. 
In \cite{TFCD} a first attempt was made to generalize this theory to higher dimensions and tube formulas have been obtained for so called \emph{fractal sprays}. The theory is developed further in \cite{Pointwise}.
A self-similar tiling \tiling is a certain kind of fractal spray and so this theory applies. One can associate a \emph{geometric zeta function} $\gz_\tiling:\bC\times(0,\iy) \to \bC$ which encodes all the geometric information of \tiling. The \emph{complex dimensions} \sD of the tiling are the poles of $\gz_\tiling$. Then for $T=\bigcup R^n$, a tube formula (describing the inner \eps-parallel volume of the union of the tiles) of the following form holds
\[V(T_{-\eps})
  = \sum_{w\in\sD} \res[s=w]{\gz_{\tiling}(s,\eps)},
\]
see \cite{FractalTubeSurvey,TFCD,Pointwise} for details. Under mild
additional assumptions, a factor $\eps^{d-w}$ can be separated from each residue, and the formula takes a form very similar to the Steiner formula:
\begin{equation}\label{eqn:tube-formula-as-Steiner}
  V(T_{-\eps}) = \sum_{w\in\sD} \eps^{d-w} C_w(\tiling),
\end{equation}
with coefficients $C_w$ independent of \eps. Just as in \eqref{eqn:Steiner-formula}, it turns out that \sD always contains $\{0,1,2,\dots,d-1\}$. 

In \cite{SteffenThesis}, the author develops a theory of fractal curvatures: a family of geometric invariants $C_k^f(\attr)$, $k=0,1,\dots,d$. The fractal Euler characteristic $C_0$ was introduced in \cite{LlorWin}, and $C_d^f$ coincides with the Minkowski content. These curvatures are defined for certain self-affine fractals and provide a fractal analogue of the coefficients $C_k(A)$ mentioned in \eqref{eqn:Steiner-formula}. Indeed, they are even localizable as \emph{curvature measures} in the same way that the coefficients of the Steiner formula are; cf. \cite{Schn}. However, the fractal analogue of the Steiner formula is absent from the context of \cite{SteffenThesis}, and it is a major impetus for this paper to establish such a link. In particular, the methods of the present paper and the theory of fractal curvatures are both applicable when the envelope (introduced in Definition~\ref{def:envelope}) is polyconvex. It remains to be determined if the coefficients $C_w(\tiling)$ appearing in \eqref{eqn:tube-formula-as-Steiner} can thus be interpreted as curvatures, and if so, if they are compatible with the theory of \cite{SteffenThesis,LlorWin}. 

The Compatibility Theorems of the present paper describe how the parallel sets of the tilings are related to the parallel sets of \attr.
For compatible sets \attr, a tube formula for \attr is obtained from the decomposition \eqref{eqn:intro-parallel-decomp} as the sum of the (inner) tube formula of an appropriate tiling $\tiling$ and a ``trivial'' part, describing the ``outer'' parallel volume of the tiled set, i.e., the convex hull \hull of \attr:
\begin{equation}\label{eqn:intro-tube-decomp}
  V(\attr_\eps)=V(T_{-\eps}) + V(\hull_\eps).
\end{equation}
Here, $V(T_{-\eps})$ is as in \eqref{eqn:tube-formula-as-Steiner} and $V(\hull_\eps)$ is as in \eqref{eqn:Steiner-formula}. A similar formula holds for the generalized tilings when a compatible feasible set exists. The Compatibility Theorems characterize the situation in which the decomposition \eqref{eqn:intro-tube-decomp} holds; they also show the limitations of this approach. We illustrate this with suitable counterexamples (see Proposition~6.3).




\subsection*{Acknowledgements}
The authors are grateful for helpful discussions, friendly advice, and useful references from Christoph Bandt, Kenneth Falconer, John Hutchinson, Michel Lapidus, Mathias Mesing, Sze-Man Ngai, Bob Strichartz, Luke Rogers, Huo-Jun Ruan, Sasha Teplyaev, Yang Wang, and Martina Z\"{a}hle.


\section{Tileset condition and nontriviality condition}

The open set condition is a classical separation condition for the study of self-similarity, cf. \cite{Fal1}. 

\begin{defn}\label{def:OSC}
  A self-affine system $\simtset$ satisfies the \emph{open set condition} (OSC) iff there is a nonempty open set $O \ci \bRd$ such that
  \linenopar
  \begin{align}
    \simt_j(O) &\ci O, \q j=1, 2,\dots, N \label{eqn:def:OSC-containment} \\
    \simt_j(O) &\cap \simt_k(O) = \emptyset \text{ for } j \neq k.
      \label{eqn:def:OSC-disjoint}
  \end{align}
  In this case, $O$ is called a \emph{feasible open set} for \attr.
\end{defn}

We denote the \emph{convex hull} of a set $A \ci \bRd$ (that is, the smallest convex set containing $A$), by $[A]$. In particular, we denote the convex hull of the attractor \attr of a system $\simtset$ by $\hull = [\attr]$.
\begin{remark}\label{rem:convex-hull-and-affine-hull}
\attr is always assumed to be embedded in the smallest possible ambient space, i.e.\ $\R^d={\rm aff} \attr$ is the affine hull of \attr and thus \hull is of full dimension.
\end{remark}

It was a crucial observation in \cite{SST}, that the convex hull satisfies $\simt_j(\hull) \ci \hull$, which implies the nestedness of \hull under iteration, cf.~\cite[Thm 5.1, p.~3162]{SST}:
\begin{prop} \label{prop:nestedness}
  $\simt^{k+1}(\hull) \ci \simt^{k}(\hull) \ci \hull$, for $k=1,2,\ldots$.
\end{prop}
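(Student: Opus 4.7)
\medskip
\noindent\textbf{Proof proposal.}
The plan is to reduce everything to the one-step containment $\simt(\hull) \ci \hull$ and then promote it to arbitrary iterates by a routine monotonicity argument. For the one-step statement, I would first record that each $\simt_j$ is affine and that $\hull = [\attr]$. Since affine maps commute with the convex hull operation, $\simt_j(\hull) = \simt_j([\attr]) = [\simt_j(\attr)]$. Because $\simt_j(\attr) \ci \simt(\attr) = \attr \ci \hull$ and $\hull$ is itself convex, taking convex hulls gives $[\simt_j(\attr)] \ci \hull$. Unioning over $j=1,\dots,N$ yields
\[
  \simt(\hull) \;=\; \bigcup_{j=1}^N \simt_j(\hull) \;\ci\; \hull,
\]
which is the crucial observation cited from \cite{SST} just above the proposition.

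Next I would note that the set map $\simt$ is monotone with respect to inclusion: if $A \ci B$, then $\simt_j(A) \ci \simt_j(B)$ for each $j$, hence $\simt(A) \ci \simt(B)$. From this and the base containment $\simt(\hull) \ci \hull$, the right-hand inclusion $\simt^k(\hull) \ci \hull$ is immediate by induction: assuming $\simt^{k-1}(\hull) \ci \hull$, apply $\simt$ and use monotonicity to obtain $\simt^k(\hull) \ci \simt(\hull) \ci \hull$. The left-hand inclusion $\simt^{k+1}(\hull) \ci \simt^k(\hull)$ is then obtained by applying $\simt$ once more to the chain $\simt^k(\hull) \ci \simt^{k-1}(\hull)$, again by monotonicity, with the same base case $\simt(\hull) \ci \hull$ starting the induction at $k=1$.

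There is essentially no obstacle here: the only substantive content is the affine-invariance of the convex hull operation together with $\simt(\attr) = \attr$, both of which are standard. The proposition is really a formal corollary of $\simt_j(\hull) \ci \hull$, so the proof I envision is a few lines establishing the base step followed by a one-line induction.
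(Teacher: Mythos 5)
Your proof is correct and follows exactly the route the paper indicates: the key fact $\simt_j(\hull)\ci\hull$ (via affine maps commuting with convex hulls and $\simt_j(\attr)\ci\attr$), followed by monotonicity of $\simt$ and induction. The paper itself defers this to \cite[Thm 5.1]{SST} rather than spelling it out, but your argument is the standard one underlying that citation.
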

The last proposition is reminiscent of \cite[\S5.2(3)]{Hut}. We recall the conditions introduced in \cite{SST} to ensure the existence of a canonical tiling of the convex hull of \attr, namely the tileset condition and the nontriviality condition.

\begin{defn}\label{def:tileset-condition}
  A self-affine system $\simtset$ (or its attractor \attr) satisfies the \emph{tileset condition} (TSC) iff it satisfies OSC with $\inn \hull$ as a feasible open set.
\end{defn}

\begin{prop} \label{prop:TSC=>OSC}
  \attr satisfies TSC if and only if 
  \begin{equation} \label{eqn:def:tileset-condition}
    \inn \simt_j(\hull) \cap \inn \simt_k(\hull) = \emptyset
    \text{ for } j \neq k.
  \end{equation}
  \begin{proof}
  The if-part is obvious; for the only-if-part apply Proposition~\ref{prop:nestedness}.
  \end{proof}
\end{prop}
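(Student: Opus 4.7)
The plan is to reduce the equivalence to a single observation about interiors together with the already-established nestedness in Proposition~\ref{prop:nestedness}. The key point is that each $\simt_j$, being affine with all eigenvalues nonzero, is a homeomorphism of $\bRd$, and therefore commutes with the interior operator: $\simt_j(\inn A) = \inn \simt_j(A)$ for every $A \ci \bRd$. Granting this bookkeeping identity, each direction of the biconditional reduces to a short computation.

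For the direction TSC $\Rightarrow$ \eqref{eqn:def:tileset-condition}, I would simply unpack TSC to the two OSC clauses with $O = \inn \hull$. The disjointness clause \eqref{eqn:def:OSC-disjoint} reads $\simt_j(\inn \hull) \cap \simt_k(\inn \hull) = \es$, which after commuting $\simt_j$ and $\simt_k$ past the interior operator is exactly \eqref{eqn:def:tileset-condition}.

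For the converse, I would verify that $O := \inn \hull$ qualifies as a feasible open set. It is nonempty by Remark~\ref{rem:convex-hull-and-affine-hull}, since $\hull$ has full dimension. The disjointness clause \eqref{eqn:def:OSC-disjoint} again follows from \eqref{eqn:def:tileset-condition} by the same commutation. The only clause that does not follow from the equation alone is the containment \eqref{eqn:def:OSC-containment}, namely $\simt_j(\inn \hull) \ci \inn \hull$; here I invoke Proposition~\ref{prop:nestedness}, which supplies $\simt_j(\hull) \ci \hull$, and then take interiors on both sides, using the commutation identity once more to pass from $\inn \simt_j(\hull)$ back to $\simt_j(\inn \hull)$.

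There is no real obstacle in this argument; the whole content is the (elementary) identity $\simt_j(\inn A) = \inn \simt_j(A)$, which is what bridges the two formulations. Proposition~\ref{prop:nestedness} is needed only to supply the containment clause of OSC, since \eqref{eqn:def:tileset-condition} is purely a disjointness statement about the images and by itself gives no information about whether $\simt_j(\hull)$ is actually contained in $\hull$.
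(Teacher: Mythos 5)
Your proof is correct and is essentially the paper's own argument, just with the details written out: one direction is the disjointness clause of OSC read off directly (via the identity $\simt_j(\inn \hull)=\inn \simt_j(\hull)$ for affine homeomorphisms), and the other direction needs only the containment clause $\simt_j(\inn\hull)\ci\inn\hull$, which you, like the paper, obtain from the nestedness $\simt_j(\hull)\ci\hull$ of Proposition~\ref{prop:nestedness}. You have correctly identified that \eqref{eqn:def:tileset-condition}~$\Rightarrow$~TSC is the direction requiring nestedness, which is the substantive content of the proposition.
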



Common examples satisfying TSC (and NTC, defined just below in Definition~\ref{def:nontriviality-condition}) include the Sierpinski gasket and carpet, the Cantor set, the Koch snowflake curve, and the Menger sponge. It is obvious from the definition that TSC implies OSC. The following examples demonstrate that the converse is not true. 

\begin{exm} \label{ex1}
  Let $\attr\ci\R$ be the self-similar set generated by
  the system $\{\simt_1,\simt_2,\simt_3\}$ where the mappings
  $\simt_j:\R\to\R$ are given by $\simt_1(x)=\frac{1}{3} x$,
  $\simt_2(x)=\frac{1}{3}x+\frac{2}{3}$ and
  $\simt_3(x)=\frac{1}{9} x + \frac{1}{9}$, respectively. Let
  $O=(0,\frac{1}{3})\cup(\frac{2}{3},1)$. Clearly, $O$ is a
  feasible open set for the OSC for \attr, since the images
  $\simt_1O=(0,\frac{1}{9})\cup(\frac{2}{9},\frac{1}{3})$,
  $\simt_2O=(\frac{2}{3},\frac{7}{9})\cup(\frac{8}{9},1)$ and
  $\simt_3O=(\frac{1}{9},\frac{4}{27})\cup(\frac{5}{27},\frac{2}{9})$
  are subsets of $O$ and pairwise disjoint. Thus \attr satisfies
  the OSC. On the other hand the TSC is not satisfied. The convex
  hull of \attr is $\hull=[0,1]$ and the sets
  $\simt_1\hull=[0,\frac{1}{3}]$ and
  $\simt_3\hull=[\frac{1}{9},\frac{2}{9}]$ strongly overlap.

  Note that it is even possible for a self-affine set to satisfy
  the \emph{strong separation condition} (that is, that the images $\simt_j(\attr)$ are pairwise disjoint) but not the tileset condition.
  An example of such a set is obtained, for instance, by
  replacing the mapping $\simt_3$ in the above example with the
  mapping $\simt'_3(x)=\frac{1}{27}x+\frac{4}{27}$. The images
  $\simt_1\attr'$, $\simt_2\attr'$ and $\simt'_3\attr'$ of the
  corresponding self-similar set $\attr'$ are pairwise disjoint,
  while the images of its convex hull have intersecting interiors.
\end{exm}

\begin{figure}
  \centering
  \scalebox{0.85}{ \includegraphics{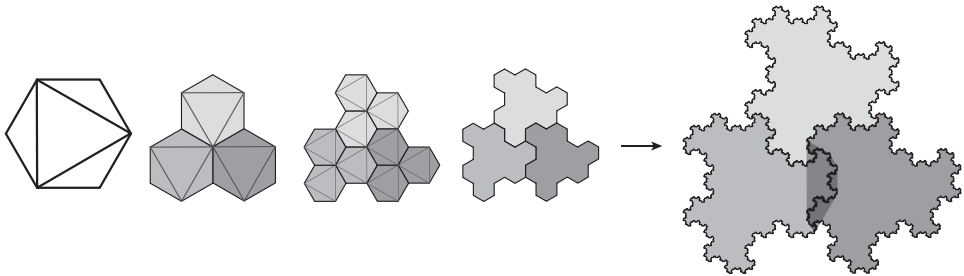}}
  \caption{\captionsize A self-similar system which satisfies the
  open set condition but not the tileset condition; see
  Example~\ref{exm:tileset_counterexample}. The attractor in
  this example tiles all of \bRt.}
  \label{fig:tileset_counterexample}
\end{figure}
%
\begin{exm} \label{ex2}
  \label{exm:tileset_counterexample}
  Consider a system of three similarity mappings, each with scaling ratio $1/\sqrt3$ and a clockwise rotation of $\gp/2$. The mappings are illustrated in Figure~\ref{fig:tileset_counterexample}. They form a system which \sats the open set condition (simply take the interior of the attractor) but not the tileset condition. On the right, the attractor has been shaded for clarity; the dark overlay indicates the intersection of the convex hulls of two first level images of the attractor.
\end{exm}

\begin{remark}\label{rem:Falconer's-question}
  After a talk on the topic of the present paper at the conference \emph{Fractal Geometry and Stochastics IV} at Greifswald, Kenneth Falconer asked the following question: Is there an easy way to decide whether for a given self-similar set \attr satisfying OSC there is a feasible open set that is convex? The results in this paper provide for the following answer. 
  
  There is a convex feasible open set for \attr if and only if \attr satisfies the tileset condition, i.e., if and only if the interior of the convex hull \hull of \attr is feasible. 
  To see this, assume that there exists a feasible open set $O$ that is convex. Then its closure $\cj{O}$ is closed and convex and satisfies $\attr \subseteq \cj{O}$ (cf.~Proposition~\ref{thm:F-in-clo(O)}). It follows that $\hull \subseteq \cj{O}$ (since the convex hull is the intersection of all closed convex sets containing \attr) and thus $\inn \hull\subseteq O$. But this implies $\simt_i(\inn \hull)\cap \simt_j(\inn \hull)\subseteq \simt_i O\cap \simt_j O=\emptyset$.
  Hence \attr satisfies the tileset condition by Proposition~\ref{prop:TSC=>OSC}.
  Thus it is sufficient to check whether the interior of the convex hull is feasible to decide the above question.
\end{remark}

\begin{defn}\label{def:nontriviality-condition}
  We say that $\simtset$ \sats the \emph{nontriviality condition} (NTC) iff its attractor $\attr$ is not convex.
\end{defn}

The nontriviality condition is, besides the TSC, the second necessary condition to ensure the existence of a canonical self-affine tiling for \attr. Proposition~\ref{thm:nontriviality-means-convexity} shows that nontriviality is precisely the condition that ensures the generators of the tiling exist, as will be apparent from Definition~\ref{def:generators}. For this reason, we say the system is \emph{trivial} iff $ \inn{(\hull)} \ci \simt(\hull)$. The following proposition shows that the present usage of ``nontrivial'' agrees with that of \cite{SST}. 


\begin{prop}\label{thm:nontriviality-means-convexity}
  A self-affine system $\simtset$ is nontrivial if and only if the images $\simt_j(\hull)$ of \hull do not cover $ \inn{(\hull)}$, i.e., the convex hull \hull \sats
  \begin{equation} \label{nontriv-cond}
 \inn{(\hull)}\nsubseteq \simt(\hull).
  \end{equation} 
\end{prop}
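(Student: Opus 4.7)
The plan is to prove the equivalent statement: $F$ is convex if and only if $\mathrm{int}\,\ghull \subseteq \simt(\ghull)$. The ``only if'' direction is essentially immediate, so the substantive work is in the ``if'' direction, and the driving idea there will be to promote the containment $\mathrm{int}\,\ghull \subseteq \simt(\ghull)$ to an equality $\simt(\ghull)=\ghull$, then iterate and invoke Hutchinson's fixed point theorem.

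For the easy direction, suppose $F$ is convex. Then $\ghull=[F]=F$, and since $F$ is the attractor, $\simt(\ghull)=\simt(F)=F=\ghull$, which certainly contains $\mathrm{int}\,\ghull$. So $\simtset$ fails to satisfy \eqref{nontriv-cond}, i.e.\ it is trivial in the sense defined just above the statement.

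For the main direction, assume $\mathrm{int}\,\ghull \subseteq \simt(\ghull)$. First I would observe that $\simt(\ghull)=\bigcup_{j=1}^N \simt_j(\ghull)$ is closed, being a finite union of continuous images of the compact set $\ghull$. Next, by Remark~\ref{rem:convex-hull-and-affine-hull}, $\ghull$ is a convex body of full dimension in $\bRd$, so $\ghull=\overline{\mathrm{int}\,\ghull}$. Taking closures in the hypothesis therefore yields $\ghull \subseteq \simt(\ghull)$. Combined with the nestedness $\simt(\ghull)\subseteq \ghull$ from Proposition~\ref{prop:nestedness}, this gives $\simt(\ghull)=\ghull$. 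Iterating then produces $\simt^k(\ghull)=\ghull$ for every $k\geq 1$, but by Hutchinson's theorem \cite{Hut} the iterates $\simt^k(\ghull)$ converge to the attractor $\attr$ in the Hausdorff metric (since $\simt$ is a contraction on the space of nonempty compact sets and $\attr$ is its unique fixed point). Hence $\ghull=\attr$, which forces $\attr$ to be convex.

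I do not foresee a genuine obstacle; the one subtle point to be careful about is justifying that $\ghull=\overline{\mathrm{int}\,\ghull}$, which relies precisely on the convention from Remark~\ref{rem:convex-hull-and-affine-hull} that $\attr$ is embedded in its affine hull so that $\ghull$ has nonempty interior. Without that convention the argument would collapse because $\mathrm{int}\,\ghull$ could be empty and the hypothesis would be vacuous. The use of Hutchinson's convergence result at the end is standard and requires no separation hypothesis.
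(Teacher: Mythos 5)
Your proposal is correct and follows essentially the same route as the paper: both arguments reduce the statement to showing that $\inn(\hull)\subseteq\simt(\hull)$ forces $\hull=\simt(\hull)$ (you take closures directly using that $\simt(\hull)$ is closed and $\hull=\cj{\inn\hull}$, while the paper runs the contrapositive with a small ball, which is the same underlying fact), and then both conclude $\attr=\hull$ from the fixed-point/uniqueness property of the attractor. The only cosmetic issue is that you write $\ghull$ where the convex hull $\hull$ is meant.
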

\begin{proof}
  First observe that (\ref{nontriv-cond}) is equivalent to
  \begin{equation} \label{nontriv-cond2}
   \hull\nsubseteq \simt(\hull).
  \end{equation}
  Indeed, the implication (\ref{nontriv-cond})$\Rightarrow$(\ref{nontriv-cond2})  is obvious.
  Conversely, if (\ref{nontriv-cond2}) holds, then $\hull\cap\simt(\hull)^\complement\neq \emptyset$. Hence there exists some point $x \in \hull\cap\simt(\hull)^\complement$ and, since $\simt(\hull)^\complement$ is open, some $\delta>0$ such that the ball $B(x,\delta)$ is contained in $\simt(\hull)^\complement$. Now, since \hull is convex and thus the closure of its interior ($\dim \hull = d$; cf. Remark~\ref{rem:convex-hull-and-affine-hull}), there is a point $y \in B(x,\delta)\cap \inn{(\hull)}$. Hence $ \inn{(\hull)}\cap\simt(\hull)^\complement$ is nonempty, implying (\ref{nontriv-cond}).

  Recall that $\simt(\hull)\ci \hull$ by nestedness (Proposition~\ref{prop:nestedness}). Therefore, if \eqref{nontriv-cond} fails, its equivalence with \eqref{nontriv-cond2} immediately implies $\hull=\simt(\hull)$. By the uniqueness of the invariant set (with respect to \simt), this means that \attr is equal to its convex hull \hull. Obviously, if the nontriviality condition is satisfied, then \attr is not equal to its convex hull. 
\end{proof}

For self-affine sets satisfying TSC, we give a different characterization of nontriviality. $\attr\subset\R^d$ is trivial if and only if it has non-empty interior.

\begin{prop}\label{thm:nontriv-implies-empty-interior}
  Let \attr be a self-affine set satisfying TSC. Then \attr is nontrivial if and only if $ \inn \attr = \es$.
\end{prop}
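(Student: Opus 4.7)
My plan is to prove the two implications separately, with the hard direction driven by a Lebesgue-measure computation. For the easy direction, I argue by contradiction: if \attr is trivial, then the chain $\inn \hull \ci \simt(\hull) \Rightarrow \hull \ci \simt(\hull) \Rightarrow \hull = \simt(\hull) \Rightarrow \attr = \hull$ used in the proof of Proposition~\ref{thm:nontriviality-means-convexity} forces $\attr = \hull$. But \hull is of full dimension by Remark~\ref{rem:convex-hull-and-affine-hull}, so $\inn \hull \neq \es$, contradicting the hypothesis $\inn \attr = \es$.

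For the main direction I argue the contrapositive: assume $\inn \attr \neq \es$, so that $\mathcal L^d(\attr) > 0$, and deduce that \attr is trivial. Each $\simt_j$ is an invertible affine map (its linear part has eigenvalues in $(0,1)$, hence nonzero determinant), so it is a homeomorphism of \bRd, and in particular $\simt_j(\inn \hull) = \inn{\simt_j(\hull)}$. Under TSC, Proposition~\ref{prop:TSC=>OSC} gives pairwise disjoint interiors for the convex bodies $\simt_j(\hull)$, and since the boundary of a convex body in \bRd has Lebesgue measure zero, the pairwise intersections $\simt_j(\hull)\cap\simt_k(\hull)$ ($j\neq k$) are $\mathcal L^d$-null. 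Because $\attr \ci \hull$, this measure-disjointness passes to $\simt_j(\attr) \ci \simt_j(\hull)$, and the self-affine identity $\attr = \bigcup_j \simt_j(\attr)$ yields
\begin{equation*}
  \mathcal L^d(\attr) = \sum_{j=1}^N \mathcal L^d(\simt_j(\attr)) = \Big(\sum_{j=1}^N |\det D\simt_j|\Big)\,\mathcal L^d(\attr).
\end{equation*}
Dividing by $\mathcal L^d(\attr) > 0$ gives $\sum_j |\det D\simt_j| = 1$.

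The same accounting applied to \hull produces $\mathcal L^d(\simt(\hull)) = \sum_j |\det D\simt_j|\,\mathcal L^d(\hull) = \mathcal L^d(\hull)$, hence $\mathcal L^d(\hull \setminus \simt(\hull)) = 0$. Since $\simt(\hull)$ is closed (a finite union of images of the compact set \hull), the set $\inn \hull \setminus \simt(\hull)$ is open in \bRd, and an open set of Lebesgue measure zero is empty. Therefore $\inn \hull \ci \simt(\hull)$, so \attr is trivial, completing the contradiction.

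The key obstacle is converting the \emph{interior}-disjointness furnished by TSC into the \emph{measure}-disjointness needed to sum volumes. This step rests on two small facts (each $\simt_j$ sends interior to interior, and the boundary of a convex body in \bRd has Lebesgue measure zero), but without it the volume identity collapses. Once it is in place, the argument reduces to the routine scaling formula $\mathcal L^d(\simt_j(A)) = |\det D\simt_j|\,\mathcal L^d(A)$ together with the observation that an open null set must be empty.
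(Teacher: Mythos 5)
Your proof is correct, and for the substantive direction it takes a genuinely different route from the paper. The easy direction (trivial $\Rightarrow \inn\attr\neq\es$) is the same in both: triviality forces $\hull=\simt(\hull)$, hence $\attr=\hull$ by uniqueness of the invariant set, and $\hull$ has nonempty interior. For the main direction, however, the paper argues topologically: it shows $\simt^k(T_0)\cap\attr=\es$ for all $k$ and then uses the contraction principle in the Hausdorff metric to produce points of $\attr^\complement$ accumulating at every point of $\attr$, so $\inn\attr=\es$. You instead run a Lebesgue-measure computation: from $\inn\attr\neq\es$ you extract $\sum_j|\det D\simt_j|=1$ via the measure-disjointness of the $\simt_j(\hull)$ (boundaries of convex bodies being null is exactly the right observation to upgrade the interior-disjointness of TSC), then conclude $\mathcal L^d(\hull\setminus\simt(\hull))=0$ and hence, since this difference contains the open set $\inn\hull\setminus\simt(\hull)$, triviality. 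This is sound; note only that the step $\mathcal L^d(\hull\setminus\simt(\hull))=\mathcal L^d(\hull)-\mathcal L^d(\simt(\hull))$ silently uses the nestedness $\simt(\hull)\ci\hull$ (Proposition~\ref{prop:nestedness}), which you should cite. Your argument is close in spirit to a measure-theoretic proof the authors drafted (and commented out) for Corollary~\ref{thm:dimension-d-implies-trivial} in the self-similar case, but yours is more self-contained since you derive $\sum_j|\det D\simt_j|=1$ directly from $\mathcal L^d(\attr)>0$ rather than from dimension theory; as a bonus it yields the slightly stronger dichotomy that under TSC either $\mathcal L^d(\attr)=0$ or $\attr=\hull$. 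The trade-off is that your argument leans on the convexity of $\hull$ (to know $\bd\simt_j(\hull)$ is null), whereas the paper's topological argument transfers verbatim to the generalized setting of Proposition~\ref{thm:nontriv-equiv-conditions}, where the feasible open set $O$ replaces $\inn\hull$ and its boundary need not be Lebesgue-null.
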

\begin{proof}
If \attr is nontrivial, then the set $T_0 := \inn(\hull \less \simt(\hull))$ is nonempty, but $T_0 \cap \attr=\es$, since $\attr \ci \simt(\hull)$. Observe that TSC implies $\simt_i( \inn \hull)\cap \simt_j(\attr)=\emptyset$ for $i\neq j$. Therefore, $\simt_i(T_0)\cap \attr \ci \simt_i(T_0)\cap \simt_i(\attr)=\simt_i(T_0\cap \attr)=\emptyset$ and so $\simt(T_0)\cap \attr=\emptyset$. By induction, we get $\simt^k(T_0) \cap \attr = \es$ for $k=0,1,2,\ldots$ 
Now let $x \in \attr$. Since, by the contraction principle,  $d_H(\attr,\simt^k(\cj{T_0})) = d_H(\simt^k(\attr),\simt^k(\cj{T_0})) \to 0$ as $k\to \infty$,
there exists a sequence $x_k\to x$ with $x_k \in \simt^k(\cj{T_0})=\cj{\simt^k(T_0)}$. For each $x_k$ there are points in $\simt^k(T_0)$ arbitrarily close to $x_k$. Hence $x$ cannot lie in the interior of \attr.

  For the converse, if \attr is trivial, then it is convex by Proposition~\ref{thm:nontriviality-means-convexity}. In view of Remark~\ref{rem:convex-hull-and-affine-hull}, $ \inn \attr \neq \es$.
\end{proof}

\begin{remark}
The fact that self-affine sets satisfying TSC and NTC have empty interior was used implicitly in \cite{SST} without mention. Proposition~\ref{thm:nontriv-implies-empty-interior} clarifies that this was justified.
\end{remark}

Combining Propositions~\ref{thm:nontriviality-means-convexity} and \ref{thm:nontriv-implies-empty-interior}, we infer that for self-affine sets satisfying TSC, non-empty interior means convexity. For the special case of self-similar sets, convexity is also equivalent to having full dimension. This follows from a result of Schief \cite[Cor.~2.3]{Schi} stating that for self-similar sets $\attr \ci \bRd$ satisfying OSC, $\dim_H F=d$ implies that \attr has interior points. 


\begin{cor}\label{thm:dimension-d-implies-trivial}
  Let $\attr \ci \bRd$ be a self-affine set satisfying TSC. If \attr has Hausdorff dimension strictly less than $d$, then \attr is nontrivial.
 Moreover, if \attr is self-similar, then also the converse holds.
\end{cor}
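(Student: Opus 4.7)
The plan is to obtain both implications by stringing together the three preceding results, since this is essentially a corollary rather than a substantive new argument. I would prove the first (non-similar) direction by contrapositive, and the converse (in the self-similar case) directly using Schief's theorem as referenced just before the statement.

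For the first direction, suppose \attr is trivial. Then by Proposition~\ref{thm:nontriviality-means-convexity} we have $\attr = \hull$, so \attr is convex. By Remark~\ref{rem:convex-hull-and-affine-hull}, \attr is embedded in $\R^d = \mathrm{aff}\,\attr$, so its convex hull \hull is of full dimension $d$. A convex set of full dimension in $\R^d$ has nonempty interior, hence $\dim_H \attr = \dim_H \hull = d$. Contrapositively, $\dim_H \attr < d$ forces \attr to be nontrivial. This direction uses neither self-similarity nor TSC in any essential way beyond what is baked into the assumptions.

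For the converse, assume $\simtset$ is a self-similar system satisfying TSC, and that \attr is nontrivial. By Proposition~\ref{thm:nontriv-implies-empty-interior}, nontriviality under TSC implies $\inn \attr = \es$. Since TSC implies OSC by Proposition~\ref{prop:TSC=>OSC}, we may apply Schief's result \cite[Cor.~2.3]{Schi}: for a self-similar set satisfying OSC, $\dim_H \attr = d$ would force \attr to have interior points. Contrapositively, $\inn \attr = \es$ gives $\dim_H \attr < d$, as desired.

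There is essentially no obstacle here; the only point deserving a sentence of care is that Schief's theorem requires genuine self-similarity (not merely self-affinity), which is exactly why the converse in the statement is restricted to the self-similar case, and why the forward implication must be proved without appealing to it. In particular, one should emphasize that the hypothesis $F \subseteq \R^d$ with $\R^d = \mathrm{aff}\,F$ (Remark~\ref{rem:convex-hull-and-affine-hull}) is what allows the step ``\attr convex $\Rightarrow \dim_H \attr = d$'' in the first half, since otherwise a lower-dimensional convex \attr embedded in $\R^d$ would provide an apparent counterexample.
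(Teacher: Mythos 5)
Your proposal is correct and follows essentially the same route as the paper: the forward direction reduces triviality to nonempty interior (you merely unfold one extra layer, going through convexity and the full-dimension remark, which is exactly how the paper proves that step inside Proposition~\ref{thm:nontriv-implies-empty-interior}), and the converse is the paper's argument via TSC $\Rightarrow$ OSC and Schief's theorem, stated contrapositively. Your closing observation that the forward implication does not really need TSC is accurate and slightly sharper than the paper's phrasing, but it does not change the substance.
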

\begin{proof}
If \attr is trivial, then, by Proposition~\ref{thm:nontriv-implies-empty-interior}, \attr has non-empty interior which implies $\dim_H \attr =d$.
Now let \attr be self-similar and satisfy TSC. Assume $\dim_H \attr =d$. Since TSC implies OSC, by \cite[Theorem~2.2 and Corollary~2.3]{Schi}, \attr has non-empty interior. Therefore, by Proposition~\ref{thm:nontriv-implies-empty-interior}, \attr is trivial.
\end{proof}

%
See also Proposition~\ref{thm:nontriv-equiv-conditions} and Corollary~\ref{cor:OSC-dimension-d-implies-trivial}, for analogues of Proposition~\ref{thm:nontriv-implies-empty-interior} and Corollary~\ref{thm:dimension-d-implies-trivial} in the more general context of OSC.

 \section{Canonical self-affine tilings} 

Let $\simtset$ be a self-affine system with attractor \attr satisfying both TSC and NTC. In this section, we recall the construction of the so called \emph{canonical self-affine tiling} of the convex hull \hull of \attr introduced in \cite[\S3]{SST}.
On the way, we prove some foundational results concerning open tilings, thereby clarifying a couple of technical points which were left vague in \cite{SST}.

\begin{defn} \label{def:opentiling}
  A sequence ${\mathcal A}=\{A^i\}_{i \in\N}$ of pairwise disjoint open sets $A^i\ci\R^d$ is called an \emph{open tiling} of a set $B\ci\R^d$ (or a \emph{tiling of $B$ by open sets}) if and only if
  \[ \overline{B}= \overline{\bigcup_{i=1}^ \infty A^i}.\]
  The sets $A^i$ are called the \emph{tiles}.
\end{defn}

Note that Definition~\ref{def:opentiling} is weaker than the usual definition of a tiling: no local finiteness is assumed. In other words, a given compact set may be intersected by infinitely many of the tiles. The case that $B$ is tiled by a finite number $m \in\N$ of tiles $A^1,\dots, A^m$ is included by setting $A^i:=\emptyset$ for $i>m$. Since here we are more interested in open tilings of $B$ by an infinite number of sets, the tiles $A^i$ are usually assumed to be nonempty. Also note that each sequence $\{A^i\}$ of disjoint open sets is an open tiling of some set $B\ci\R^d$ but this set is not uniquely determined. For instance, if $\{A^i\}$ is an open tiling of $B$ then it is also an open tiling of $ \inn B$ and of $\cj{B}$. The sequence $\{A^i\}$ only determines the closure of $B$ uniquely.

The following observation regarding the boundaries of the tiles will be useful  in the sequel. In particular, it is used repeatedly in the proof of Theorem~\ref{thm:parallel-set-compatibility}, a central result of this paper. Let $\{A^i\}$ be an open tiling of a set $B$. Denote by  $A=\bigcup_{i=1}^ \infty A^i$ the union of the tiles. Since the sets $A^i$ are open, $A$ is open as well. The boundary of $A$ (defined in the usual way as $\bd A= \cj{A}\cap\cj{A^\complement}$ or, since $A$ is open, equivalently by $\bd A=\cj{A}\setminus A$) is characterized by the tiles as follows:
\begin{lemma} \label{thm:opentilinglem1}
  $\displaystyle \bd A = \cj{ \bigcup_i \bd A^i}$.
\end{lemma}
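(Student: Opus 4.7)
The plan is to prove the two inclusions of $\bd A = \overline{\bigcup_i \bd A^i}$ separately, treating the forward inclusion as the easy direction and the reverse inclusion via a line-segment argument.

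For $\overline{\bigcup_i \bd A^i} \subseteq \bd A$, I would first show that each $\bd A^i$ is contained in $\bd A = \overline{A} \setminus A$ (using that $A$ is open). The containment $\bd A^i \subseteq \overline{A^i} \subseteq \overline{A}$ is immediate. The content lies in showing $\bd A^i \cap A = \es$: if $x \in \bd A^i$ also lay in some $A^j$, then by openness of $A^j$ a whole neighborhood of $x$ would sit inside $A^j$. The case $j = i$ contradicts $x \notin A^i$, while for $j \neq i$ pairwise disjointness forces this neighborhood to miss $A^i$, contradicting $x \in \overline{A^i}$. Hence $\bd A^i \subseteq \bd A$ for every $i$, and since $\bd A$ is closed the union's closure is still contained in $\bd A$.

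For the reverse inclusion $\bd A \subseteq \overline{\bigcup_i \bd A^i}$, let $x \in \bd A$ and fix $\eps > 0$; I need to produce a point of some $\bd A^j$ inside $B(x,\eps)$. Since $x \in \overline{A}$, choose $y \in B(x,\eps) \cap A^i$ for some index $i$. Consider the segment $\gamma(t) = (1-t)y + tx$, $t\in[0,1]$, which lies in $B(x,\eps)$ by convexity of the ball. At $t=0$ we have $\gamma(0)=y\in A^i$, and at $t=1$ we have $\gamma(1)=x \notin A$, so in particular $\gamma(1) \notin A^i$. Setting $t^* := \sup\{t \in [0,1] : \gamma(t) \in A^i\}$, openness of $A^i$ forces $\gamma(t^*) \notin A^i$ (otherwise a small neighborhood around $\gamma(t^*)$ would keep the segment in $A^i$ for slightly larger $t$), while by definition of the sup we have $\gamma(t^*) \in \overline{A^i}$. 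Thus $\gamma(t^*) \in \bd A^i \cap B(x,\eps)$, as desired.

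The main obstacle is really just the first inclusion, where one must be careful to rule out a boundary point of one tile lying in the interior of another; this is precisely where pairwise disjointness of the tiles is used. The reverse inclusion is routine topology once one has the idea of walking along a segment from a point of $A^i$ out to a point of $\bd A$, and does not require the sets $A^i$ to be disjoint at all.
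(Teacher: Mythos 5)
Your proof is correct and follows essentially the same route as the paper's: the inclusion $\bd A\subseteq\cj{\bigcup_i\bd A^i}$ via a segment from a nearby point of some $A^i$ out to $x$, crossing $\bd A^i$ inside the ball, and the reverse inclusion via openness plus pairwise disjointness of the tiles. Your packaging of the easy direction (showing $\bd A^i\subseteq\bd A$ for each $i$ and then taking closures) is a slightly cleaner organization of the same argument the paper carries out with sequences, and your closing remark about where disjointness is actually needed is accurate.
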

\begin{proof}
  ($\ci$): Let $x \in \bd A$. Then there
  exists a sequence $\{x_k\}_{k=1}^ \infty$ in $A=\bigcup_i
  A^i$ converging to $x$ as $k\to \infty$. Using $\{x_k\}$, we
  construct a sequence $\{x_k'\}$ in $\bigcup_i \bd A^i$ in the
  following way. For each $x_k$ there is a (unique) index
  $n(k) \in\N$ such that $x_k \in A^{n(k)}$. Since $A^{n(k)}$ is
  open, $x\notin A^{n(k)}$. Now let $x_k'$ be any point of the
  set $[x,x_k]\cap \bd A^{n(k)}$, where $[x,x_k]$ is the (closed)
  line segment between $x$ and $x_k$. Such a point exists, since
  $x \in (A^{n(k)})^\complement$ (but it may not be unique). Then, clearly,
  $\{x_k'\}$ is a sequence in $\bigcup_i \bd A^i$. Moreover,
  $x_k'\to x$ as $k\to \infty$, since $x_k\to x$ and
  $|x-x_k'|<|x-x_k|$. But this implies that
  $x \in\cj{\bigcup_i \bd A^i}$, proving the inclusion from
  left to right.

  ($\supseteq$): For a proof of the reversed inclusion, let
  $x \in\cj{\bigcup_i \bd A^i}$. Then there exists a
  sequence $\{y_k\}\ci \bigcup_i \bd A^i$ such that $y_k\to
  x$ as $k\to \infty$. The existence of this sequence (and the
  disjointness of the tiles $A^i$) imply immediately that
  $x\notin A$, since an interior point of a set can not be
  accumulation point of a sequence in its complement.
   Furthermore, each $y_k$ is an element of at least one of the sets
   $\bd A^i$. Let $n(k)$ be an index such that $y_k \in \bd A^{n(k)}$.
   For each $y_k$, we find points in $A^{n(k)}$ arbitrarily close to
   $y_k$. Choose $y_k' \in A^{n(k)}$ such that $|y_k-y_k'|<\frac{1}{k}$.
   Then $|x-y_k'|\le|x-y_k|+|y_k-y_k'|<|x-y_k|+\frac{1}{k}\to 0$ as
   $k\to \infty$.
  Thus $y_k'\to x$. Recalling that $y_k' \in A^{n(k)}$ and thus
  $\{y_k'\}\ci \bigcup_i A^i=A$, we conclude that
  $x \in \cj{A}$. Together with $x\notin A$ this
  yields $x \in \cj{A}\setminus A=\bd A$,
  completing the proof.
\end{proof}

Let
\begin{equation}\label{eqn:def:words}
  W := \bigcup_{k=0}^ \infty\{1,\dots,N\}^k
\end{equation}
denote the set of all finite words formed by the alphabet $\{1,\dots,N\}$. For any word $w=w_1 w_2\dots w_n \in W$, let $\simt_w = \simt_{w_1} \circ \simt_{w_2} \circ \dots \circ \simt_{w_n}$. In particular, if $w \in W$ is the \emph{empty word} then $\simt_w=\mathrm{Id}$.

Denote by $G_1, G_2, \ldots$ the connected components of the open set
  $T_0:= \inn{(\hull\setminus \simt(\hull))}$; $T_0=\bigcup_{q \in Q} G_q$. The index set $Q \ci \N$ may be infinite, but, since $T_0$ is open, the number of its connected components is certainly at most countable.
  
\begin{defn}\label{def:self-affine-tiling} \label{def:generators}
  The \emph{canonical self-affine tiling} associated with $\simtset$ (or with \attr) is
  \linenopar
  \begin{equation}\label{eqn:def:self-affine-tiling}
    \tiling = \{ \simt_w(G_q): w \in W, q \in  Q\}.
  \end{equation}
  The open subsets $G_q$ of \hull are called the \emph{generators} of \tiling.  It is shown in \cite[Thm.~5.16, p.~3167]{SST} that \tiling is an open tiling of $\hull = [\attr]$ in the sense of Definition~\ref{def:opentiling}, i.e.\ the sets $\simt_w(G_q)$ are pairwise disjoint and
  \[
  \hull=\cj{\bigcup_{R \in\tiling} R}.
  \]
 \end{defn}
Write $T=\bigcup_{R \in\tiling} R$ for the union of the tiles of \tiling and $\bd T$ for the boundary of this set. Clearly, $T$ is open, $\cj{T}=\hull$ and so $\bd T = \cj{T} \less T=\hull \less T$. By Lemma~\ref{thm:opentilinglem1}, we have
\begin{equation}\label{eqn:boundaryT}
\bd T = \cj{\bigcup_{R \in \tiling} \bd R}.
\end{equation}

Note that the closure in the representation \eqref{eqn:boundaryT} 
cannot be omitted. One has $\attr \ci \bd T$ (cf.~Lemma~\ref{thm:alt-generated-tiling-boundary-decomp}), while $F\not \ci \bigcup_{R \in\tiling} \bd R$. If the Hausdorff dimension $\dim_H \attr$ is strictly greater than $d-1$, then taking the closure leads to a jump of dimension. More precisely, one has the equality $\dim_H \bd T=\max\{\dim_H F, d-1\}$, as is shown in Proposition~\ref{prop:dim-bd-T}. For the proof, it is convenient to work with a slight variation of the tiling described above: It is possible to consider the set $T_0$ as the generator of a tiling, instead of its connected components $G_q$. This point of view leads to a different tiling $\tiling' := \{ \simt_w(T_0) \suth w \in W\}$ of \hull whose tiles are not necessarily connected.
  It is easily seen that $\tiling'$ is also an open tiling of \hull in the sense of Definition~\ref{def:opentiling}. Moreover, for each tile $\simt_w (T_0) \in \tiling'$, $\{\simt_w (G_q) \suth q \in Q\}$ is an open tiling of $\simt_w (T_0)$.  If $T' = \bigcup_{R \in \tiling'} R$ is the union of the tiles, then by two applications of Lemma~\ref{thm:opentilinglem1}, the boundaries of both tilings coincide:
  \[\bd T'=\cj{\bigcup_{R' \in\tiling'}\bd R'}=\cj{\bigcup_{w \in W} \bd \simt_w T_0}=\cj{\bigcup_{w \in W} \cj{\bigcup_{q \in Q} \bd \simt_w G_q}}=\bd T.\]

\begin{lemma}\label{thm:alt-generated-tiling-boundary-decomp}
  $\displaystyle \bd T= \attr \cup \bigcup\nolimits_{R \in\tiling'} \bd R.$
\end{lemma}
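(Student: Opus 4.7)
My plan is to exploit the two tilings $\tiling$ and $\tiling'$ simultaneously. The paragraph immediately preceding the lemma establishes that $\bd T = \bd T'$, and Lemma~\ref{thm:opentilinglem1} applied to $\tiling'$ gives
\[
\bd T = \cj{\bigcup\nolimits_{R \in \tiling'} \bd R}.
\]
So the task reduces to showing that the closure on the right equals $\attr \cup \bigcup_{R \in \tiling'} \bd R$, i.e., that the only ``new'' limit points added by the closure operation lie in $\attr$.

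For the inclusion $\supseteq$, the fact that $\bigcup_{R\in\tiling'}\bd R \subseteq \bd T$ is immediate from the displayed identity. To see $\attr \subseteq \bd T$, I would argue $\attr \cap T = \es$: since $\attr \subseteq \simt(\hull)$, the generator $T_0 = \inn(\hull \setminus \simt(\hull))$ satisfies $T_0 \cap \attr = \es$, and TSC then forces $\simt_w(T_0) \cap \attr = \es$ for every $w \in W$ (this is essentially the induction appearing in the proof of Proposition~\ref{thm:nontriv-implies-empty-interior}: $\simt_i(\inn\hull) \cap \simt_j(\attr) = \es$ for $i \neq j$, and $\simt_i(T_0) \cap \simt_i(\attr) = \simt_i(T_0 \cap \attr) = \es$). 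Hence $T \cap \attr = \es$, and combined with $\attr \subseteq \hull = \cj{T}$ this gives $\attr \subseteq \cj{T} \setminus T = \bd T$.

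For the inclusion $\subseteq$, let $x \in \bd T$. If $x$ already lies in $\bigcup_{R \in \tiling'} \bd R$ there is nothing to prove, so assume otherwise. Then there exists a sequence $x_k \in \bd \simt_{w_k}(T_0)$ with $x_k \to x$, and I split into cases on $|w_k|$. If the lengths $|w_k|$ stay bounded, then by pigeonhole some word $w$ is repeated infinitely often, so a subsequence of $\{x_k\}$ lies in the single closed set $\bd \simt_w(T_0)$; passing to the limit gives $x \in \bd \simt_w(T_0) \subseteq \bigcup_{R \in \tiling'} \bd R$, contradicting our assumption. Hence $|w_k| \to \infty$ along a subsequence. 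Then $\bd \simt_{w_k}(T_0) \subseteq \simt_{w_k}(\hull)$, and fixing any $y \in \attr$, the point $\simt_{w_k}(y) \in \attr$ satisfies $|x_k - \simt_{w_k}(y)| \le \diam \simt_{w_k}(\hull)$, which tends to $0$ as $|w_k| \to \infty$ by the contraction property. Thus $\simt_{w_k}(y) \to x$, and since $\attr$ is closed we conclude $x \in \attr$.

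The main obstacle I anticipate is the case $|w_k| \to \infty$: one must be sure that $\diam \simt_{w_k}(\hull) \to 0$. For self-similar systems this is immediate from the product of scaling ratios, but in the self-affine setting it requires the assumption that all eigenvalues of each $\simt_j$ have modulus less than $1$ (Definition~\ref{def:self-affine-system}); the precise bound on $\diam \simt_w(\hull)$ in terms of $|w|$ is the only place where genuine care is needed.
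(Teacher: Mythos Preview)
Your argument is correct and follows essentially the same route as the paper: both establish $\supseteq$ via Lemma~\ref{thm:opentilinglem1} together with $\attr\cap T=\es$ and $\attr\ci\cj T$, and both handle $\subseteq$ by a dichotomy on an approximating sequence (you use points in $\bd\simt_{w_k}(T_0)$ and split on the word length $|w_k|$, the paper uses points in $\simt_{w_k}(T_0)$ and splits on the diameter, which is equivalent since only finitely many words have $\diam\simt_w(T_0)\ge c$). Your concern about $\diam\simt_{w_k}(\hull)\to 0$ in the self-affine case is fair but applies equally to the paper's own proof (which even writes $r_{w(i)}$ as if for similitudes); in either argument it is enough that the $\simt_j$ are metric contractions, so that $\diam\simt_w(\hull)\le(\max_j\mathrm{Lip}\,\simt_j)^{|w|}\diam\hull$.
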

\begin{proof}
  $(\ce)$ From Lemma~\ref{thm:opentilinglem1}, we have $\bigcup_{R \in \tiling'} \bd R \ci \bd T'=\bd T$. For the inclusion $\attr\subseteq \bd T$, note that \tiling is an open tiling of \hull and thus
  $\attr\ci \hull= \overline{\bigcup_{R\in\tiling} R}=\overline{T}$.
  But, by \cite[Thm.~5.16, p.~3167]{SST}, $R\cap
  \attr=\emptyset$ for all $R\in\tiling$, i.e. $\attr\cap T=\emptyset$.
  Thus $\attr\ci \overline{T}\setminus T=\bd T$. 

  $(\ci)$ Let $x \in\bd T=\bd T'= \bd\left(\bigcup_{R\in\tiling'}R\right)$. There exists a sequence $(x_i)$ of points converging to $x$ such that each $x_i$ is in
  some tile $R^i \in \tiling'$. For each of these tiles $R^i$ there is a word $w{(i)} \in W$
  such that $R^i = \simt_{w{(i)}} (T_0)$. 
  Observe that
  \linenopar
  \begin{align*}
    d_H\left(R^i, \vstr[2]\simt_{w{(i)}} (\attr)\right)
    &= d_H\left(\simt_{w{(i)}} (T_0), \vstr[2]\simt_{w{(i)}} (\attr) \right)
    = r_{w{(i)}} d_H(T_0, \attr)
    \le r_{w{(i)}} \diam \hull,
  \end{align*}
  since both \attr and $T_0$ are subsets of \hull.
  For the sequence of tiles $R^i$, there are two possibilities:
  \begin{enumerate}[(i)]
    \item There is a subsequence $(i_k)$ such that $\diam (\simt_{w{(i_k)}} (T_0)) \limas{k} 0$.
    \item There is a constant $c>0$ such that $\diam (\simt_{w{(i)}} (T_0)) \ge c$ for each $i \in\N$.
  \end{enumerate}
  Case (i) implies $r_{w{(i_k)}} \to 0$, and hence $d(x_{i_k},\attr) \to 0$, so that $x \in F$.
  Case (ii) is when $x \in \bd R$ for some $R\in\tiling'$. To see this, observe that $\diam (\simt_{w} (T_0)) \ge c$ for only finitely many words $w \in W$. Hence at least one of these words occurs infinitely often in the sequence $(w{(i)})$, i.e., there is a $w \in W$ and a subsequence $(i_k)$ such that $w(i_k) = w$ for all $k$. But this implies $x_{i_k} \in \simt_w (T_0) =: R$ for all $k$ and thus $x \in \cj{R}$, since $x_{i_k} \to x$. It follows that $x \in \bd R$, since $R$ is open and $x\in\bd T'$.
\end{proof}

\begin{prop}\label{prop:dim-bd-T}
  $\dim_H \bd T = \max\{\dim_H \attr, d-1\}.$
\end{prop}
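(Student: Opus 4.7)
The plan is to use the decomposition
\[
\bd T = \attr \cup \bigcup_{w \in W} \bd \simt_w(T_0)
\]
furnished by Lemma~\ref{thm:alt-generated-tiling-boundary-decomp} (writing each tile of $\tiling'$ as $\simt_w(T_0)$ for some $w \in W$). Since $W$ is countable and Hausdorff dimension is stable under countable unions,
\[
\dim_H \bd T = \max\bigl\{\dim_H \attr,\ \sup_{w \in W} \dim_H \bd \simt_w(T_0)\bigr\}.
\]
Each $\simt_w$ is an invertible affine map (the eigenvalue hypothesis of Definition~\ref{def:self-affine-system} forces nonzero determinant), hence a bi-Lipschitz self-homeomorphism of $\R^d$; consequently $\bd \simt_w(T_0) = \simt_w(\bd T_0)$ has Hausdorff dimension equal to $\dim_H \bd T_0$. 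It therefore suffices to show $\dim_H \bd T_0 = d-1$ and then assemble the two inequalities.

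For the upper bound I would show
\[
\bd T_0 \subseteq \bd \hull \cup \bigcup_{j=1}^N \bd \simt_j(\hull).
\]
Let $x \in \bd T_0$; then $x \in \cj{T_0} \subseteq \hull$. If $x \in \inn \hull$, every neighborhood of $x$ meets $T_0 \subseteq \simt(\hull)^\complement$, so $x \in \cj{\simt(\hull)^\complement}$; and if $x \notin \simt(\hull)$ then since $\simt(\hull)$ is closed, a small enough neighborhood of $x$ would lie in $\inn \hull \cap \simt(\hull)^\complement$, an open subset of $\hull \setminus \simt(\hull)$, hence of $T_0$, contradicting $x \notin T_0$. So $x \in \bd \simt(\hull)$, and the elementary inclusion $\bd(\bigcup_j A_j) \subseteq \bigcup_j \bd A_j$ for closed sets yields $x \in \bigcup_j \bd \simt_j(\hull)$. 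Otherwise $x \in \bd \hull$. The right-hand side is a finite union of boundaries of $d$-dimensional convex bodies (each $\simt_j(\hull)$ is convex and full-dimensional since $\simt_j$ is invertible and $\hull$ has full dimension by Remark~\ref{rem:convex-hull-and-affine-hull}), so has Hausdorff dimension $d-1$.

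For the lower bound, pick any $w \in W$ and set $R := \simt_w(T_0)$; this is a bounded nonempty open proper subset of $\R^d$, nonempty by the nontriviality condition via Proposition~\ref{thm:nontriviality-means-convexity}. Let $\pi : \R^d \to \R^{d-1}$ be projection onto the first $d-1$ coordinates. For each $p \in \pi(R)$ the fiber $\pi^{-1}(p)$ meets $R$ and, being an unbounded line while $R$ is bounded, must exit $R$ at a point of $\bd R$, so $\pi(R) \subseteq \pi(\bd R)$. Since $\pi(R)$ is a nonempty open subset of $\R^{d-1}$ it has positive $(d-1)$-Lebesgue measure, and as $\pi$ is $1$-Lipschitz this forces $\dim_H \bd R \ge d-1$. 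Combined with $\attr \subseteq \bd T$ from Lemma~\ref{thm:alt-generated-tiling-boundary-decomp}, this gives the matching lower bound $\dim_H \bd T \ge \max\{\dim_H \attr, d-1\}$.

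The only mildly delicate step is the upper-bound identification of $\bd T_0$ via the neighborhood case analysis above; everything else reduces to countable stability of Hausdorff dimension together with the standard fact that boundaries of $d$-dimensional convex bodies, and of bounded proper open subsets of $\R^d$, both have Hausdorff dimension $d-1$.
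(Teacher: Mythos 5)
Your proof is correct and follows essentially the same route as the paper: the decomposition of $\bd T$ from Lemma~\ref{thm:alt-generated-tiling-boundary-decomp}, the inclusion $\bd T_0 \ci \bd \hull \cup \bigcup_{j} \bd \simt_j(\hull)$ for the upper bound, the fact that the boundary of a nonempty bounded open set has Hausdorff dimension at least $d-1$ for the lower bound, and countable stability of $\dim_H$. You merely supply details (the projection argument and the neighborhood case analysis for the inclusion) that the paper leaves implicit.
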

\begin{proof}
  For $T_0 = \inn{(\hull\setminus \simt(\hull))}$, 
  observe that $\bd T_0$ is a subset of $\bd \hull \cup \bigcup_j \bd \simt_j \hull$. Since \hull and $\{\simt_j \hull\}_{j=1}^N$ are convex, their boundary has dimension $d-1$. It follows that $\dim_H \bd T_0 \le d-1$ by stability and monotonicity of $\dim_H$. For the reverse inequality, note that $\bd T_0$ is the boundary of an open set in \bRd. Hence $\dim_H \bd T_0 = d-1$ and so $\dim_H \bd R=d-1$ for each $R\in\tiling'$. Now the assertion follows from Lemma~\ref{thm:alt-generated-tiling-boundary-decomp} by countable stability of the Hausdorff dimension.
  %
\end{proof}

 \section{Compatibility of the \eps-parallel sets $\attr_\eps$ and $T_{-\eps}$}
\label{sec:compatibility}

In this section, we clarify the relation between the \emph{(outer) parallel sets} of \attr and the \emph{inner parallel sets} of the associated tiling \tiling.
We characterize the situation in which these parallel sets essentially coincide, for this allows to use the tiling and the theory of complex dimensions developed in \cite{TFCD} to obtain a tube formula for \attr.

\begin{figure}
  \centering
 \includegraphics{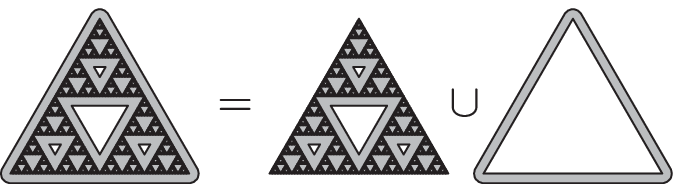}
  \caption{\captionsize The exterior \eps-\nbd of the Sierpinski gasket \attr is the union of the inner \eps-\nbd of the Sierpinski gasket tiling and the exterior \eps-\nbd of $\hull=[\attr]$. This union is disjoint except for the boundary of \hull.}
  \label{fig:gasket-compatibility}
\end{figure}

\begin{defn}\label{def:eps-parallel-set}
  For any nonempty, bounded set $A \ci \bRd$, and $\eps\ge 0$, define the (outer)
  \emph{\eps-parallel set} (or \emph{\eps-neighbourhood}) of $A$ by
  \linenopar
  \begin{align}\label{eqn:def:outer-eps-parallel-set}
    A_{\eps}:=\{x \in \cj{A^\complement} \suth d(x,A)\le\eps\}.
  \end{align}
  Similarly, define the \emph{inner \eps-parallel set} (or
  \emph{inner \eps-neighbourhood}) of $A$ by
  \linenopar
  \begin{align}\label{eqn:def:inner-eps-parallel-set}
    A_{-\eps}:=\{x \in \cj{A} \suth d(x,A^\complement)\le\eps\},
  \end{align}
  or equivalently by {\bf $A_{-\eps}= (A^\complement)_\eps$}.
\end{defn}

  Note that we do not include interior points of $A$ into the outer parallel sets, as is often done.
  For each $\eps\ge 0$, both, $A_\eps$ and $A_{-\eps}$, are always closed, bounded
  and nonempty subsets of \bRd. 
  Moreover, $A_0 = A_{-0} = \bd A \ci A_\gd$ for any $\gd \in \bR$,
  and $A_{-\eps}=A$ for $\eps\ge\rho$, where $\rho=\rho(A)$
  denotes the inradius of $A$. In particular, if
  $ \inn{A}=\emptyset$, then $A_{-\eps}=A$ for all $\eps\ge 0$.


For an open tiling ${\mathcal A}=\{A^i\}$ (cf.~Def.~\ref{def:opentiling}), denote by $A_{-\eps}$ the inner \eps-parallel set of $A:=\bigcup_i A^i$. 
\begin{lemma}\label{thm:decomposition-of-inner-epsnbd-via-tiling}
  $A_{-\eps}=\cj{\bigcup_{i=1}^ \infty A^i_{-\eps}}$.
\end{lemma}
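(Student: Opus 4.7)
The plan is to prove the two inclusions of $A_{-\eps}=\overline{\bigcup_i A^i_{-\eps}}$ separately, where the nontrivial direction is handled by splitting into the cases of being in the open set $A$ versus being on its boundary, with the latter invoking Lemma~\ref{thm:opentilinglem1}.

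First I would establish the easy inclusion $(\supseteq)$. Since $A_{-\eps}$ is a closed set (preimage of $[0,\eps]$ under the continuous function $d(\cdot,A^\complement)$, intersected with the closed set $\cj{A}$), it suffices to show $A^i_{-\eps}\ci A_{-\eps}$ for each $i$. For $x\in A^i_{-\eps}$, clearly $x\in\cj{A^i}\ci\cj{A}$, and from $A^i\ci A$ we obtain $A^\complement\ci (A^i)^\complement$, whence $d(x,A^\complement)\le d(x,(A^i)^\complement)\le\eps$.

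For the reverse inclusion $(\ci)$, let $x\in A_{-\eps}$. Since $A$ is open, $\cj{A}=A\cup\bd A$, and we consider the two cases.
If $x\in A$, then $x\in A^i$ for a unique $i$ by disjointness, so in particular $x\in\cj{A^i}$. The same inclusion $A^\complement\ci(A^i)^\complement$ used above gives $d(x,(A^i)^\complement)\le d(x,A^\complement)\le\eps$, so $x\in A^i_{-\eps}$.
If $x\in\bd A$, apply Lemma~\ref{thm:opentilinglem1}: there is a sequence $y_k\in\bigcup_i\bd A^i$ with $y_k\to x$. For each $k$, pick $i(k)$ with $y_k\in\bd A^{i(k)}$. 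Because $A^{i(k)}$ is open, $\bd A^{i(k)}\ci(A^{i(k)})^\complement$, so $d(y_k,(A^{i(k)})^\complement)=0\le\eps$, and $y_k\in\cj{A^{i(k)}}$, giving $y_k\in A^{i(k)}_{-\eps}\ci\bigcup_i A^i_{-\eps}$. Hence $x\in\cj{\bigcup_i A^i_{-\eps}}$.

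I anticipate the main subtlety to be the boundary case: for $x\in\bd A$ one cannot argue directly from the distance inequality $d(x,A^\complement)\le\eps$, because approximating sequences $x_k\in A$ converging to $x$ need not themselves lie in any single $A^i_{-\eps}$ even if they satisfy $d(x_k,A^\complement)\le\eps$ in the limit. The resolution is precisely Lemma~\ref{thm:opentilinglem1}, which gives approximation through boundary points of the tiles, and these boundary points lie automatically in $A^i_{-\eps}$ for every $\eps\ge 0$ regardless of the value of $\eps$. Everything else is routine bookkeeping with the continuity of distance functions and the monotonicity of the distance under enlargement of the target set.
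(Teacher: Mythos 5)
Your overall architecture is the same as the paper's: two inclusions, with the case $x\in\bd A$ of the direction $(\ci)$ handled via Lemma~\ref{thm:opentilinglem1} and the observation $\bd A^i\ci A^i_{-\eps}$, exactly as in the paper. The case $x\in A$ of that direction is also correct: there the inequality $d(x,(A^i)^\complement)\le d(x,A^\complement)$ really does follow from $A^\complement\ci(A^i)^\complement$, since enlarging the set to which one measures the distance can only decrease that distance.

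There is, however, a genuine gap in your $(\supseteq)$ direction. You derive $d(x,A^\complement)\le d(x,(A^i)^\complement)$ from the inclusion $A^\complement\ci(A^i)^\complement$, but that inclusion yields precisely the \emph{opposite} inequality (distance to the larger set is the smaller one); note that you invoke ``the same inclusion'' to produce the two reversed inequalities in the two halves of your argument, and both derivations cannot be valid. The point is not cosmetic: for general open sets $B\ci C$ the inclusion $B_{-\eps}\ci C_{-\eps}$ is simply false (take $B=(4,6)\ci C=(0,10)$ and $\eps=1$; then $5\in B_{-1}$ but $d(5,C^\complement)=5$, so $5\notin C_{-1}$), so $A^i_{-\eps}\ci A_{-\eps}$ cannot follow from $A^i\ci A$ alone. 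What saves the statement here is the pairwise disjointness of the open tiles, which you never use in this step: no tile $A^j$ can contain a point of $\bd A^i$ (an open $A^j$ disjoint from $A^i$ cannot meet $\cj{A^i}\setminus A^i$), hence $\bd A^i\ci A^\complement$; and for $x\in\cj{A^i}$ one has $d(x,(A^i)^\complement)=d(x,\bd A^i)\ge d(x,A^\complement)$, which is the inequality you need. (The paper uses this fact in the form $d(x,(A^l)^\complement)=d(x,A^\complement)$ for $x\in A^l$.) With this repair your proof is complete and coincides with the paper's.
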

\begin{proof}
  Let $x \in A_{-\eps}$. Then $x \in \bd A$ or there is some $l \in\N$ such that $x \in A^l$.
  In the first case, by Lemma~\ref{thm:opentilinglem1}, $x \in\bd A=\overline{ \bigcup_i \bd A^i}\ci \overline{ \bigcup_i A^i_{-\eps}}$, since $\bd A^i\subset A^i_{-\eps}$. In the latter case $d(x, (A^l)^\complement)=d(x, (\bigcup_i A^i)^\complement)\le\eps$ and thus $x \in A^l_{-\eps}\ci\cj{\bigcup_{i=1}^ \infty A^i_{-\eps}}$. Hence $A_{-\eps}\ci \cj{\bigcup_{i=1}^ \infty A^i_{-\eps}}$.

  For the reverse inclusion, let $x \in \cj{\bigcup_{i=1}^ \infty A^i_{-\eps}}$. Then there exists a sequence $y_j \in \bigcup_{i=1}^ \infty A^i_{-\eps}$ with $y_j\to x$ as $j\to \infty$. For each $j$ there is an index $i(j) \in\N$ such that $y_j \in A^{i(j)}_{-\eps}$, i.e., $y_j \in\cj{A^{i(j)}}$ and $d(x,(A^{i(j)})^\complement)\le\eps$.
  Since $A^{i(j)}\ci A$, we infer $y_j \in \cj{A}$ and $d(x, A^\complement)\le d(x, (A^{i(j)})^\complement)\le\eps$, i.e., $y_j \in A_{-\eps}$. But this implies $x \in A_{-\eps}$, since $A_{-\eps}$ is closed.
\end{proof}

Now let $\simtset$ be a self-affine system satisfying TSC and NTC, \attr its attractor and $\tiling=\{R^i\}_{i \in\N}$ the associated canonical tiling, as introduced in the previous sections. Write $T:=\bigcup_i R^i$ for the union of the tiles of \tiling. For $\eps\ge 0$, the set $T_{-\eps}$ will be regarded as the \emph{inner $\eps$-parallel set} of the tiling. 

\begin{prop} \label{parsetprop}
  Let \attr be the self-affine set associated to the system $\simtset$ satisfying TSC and NTC,
  and let \tiling be the associated canonical self-affine tiling
  of its convex hull \hull. Then
  \begin{enumerate}
    \item $\attr\ci \bd T$.
    \item $\attr_\eps \cap \hull\ci T_{-\eps}$ for $\eps\ge 0$.
    \item $\attr_\eps \cap \hull^\complement\ci \hull_\eps$ for $\eps\ge 0$.
  \end{enumerate}
\end{prop}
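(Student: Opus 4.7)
The plan is to prove (i) first, since (ii) and (iii) are then essentially one-line consequences obtained via monotonicity of the distance function under set inclusion.

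For (i), I would use the argument already contained in the proof of Lemma~\ref{thm:alt-generated-tiling-boundary-decomp}. By Definition~\ref{def:self-affine-tiling}, $\tiling$ is an open tiling of $\hull$, so $\cj{T} = \hull$ and hence $\attr \ci \hull = \cj{T}$. On the other hand, by \cite[Thm.~5.16]{SST} every tile $R \in \tiling$ is disjoint from $\attr$, so $T = \bigcup_{R \in \tiling} R$ satisfies $T \cap \attr = \es$. Combining these, and using that $T$ is open so $\bd T = \cj{T} \less T$, we obtain $\attr \ci \cj{T} \less T = \bd T$.

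For (ii), I would fix $x \in \attr_\eps \cap \hull$ and unpack Definition~\ref{def:eps-parallel-set}. The hypotheses TSC and NTC give $\inn \attr = \es$ by Proposition~\ref{thm:nontriv-implies-empty-interior}, so $\cj{\attr^\complement} = \bRd$ and the condition $x \in \attr_\eps$ reduces to $d(x, \attr) \le \eps$. Membership in $T_{-\eps}$ requires $x \in \cj{T}$ and $d(x, T^\complement) \le \eps$; the first is immediate from $\hull = \cj{T}$, and for the second I would use (i) together with openness of $T$ (so $\bd T \ci T^\complement$) to get $\attr \ci T^\complement$, whence $d(x, T^\complement) \le d(x, \attr) \le \eps$. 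Part (iii) is the easiest: for $x \in \attr_\eps \cap \hull^\complement$, the membership $x \in \cj{\hull^\complement}$ is automatic from $x \in \hull^\complement$, while $\attr \ci \hull$ gives $d(x, \hull) \le d(x, \attr) \le \eps$, placing $x$ in $\hull_\eps$.

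I do not anticipate any real obstacle. The only subtlety worth flagging is the slightly nonstandard convention of Definition~\ref{def:eps-parallel-set}, which excludes interior points of $A$ from $A_\eps$; this is precisely the reason the empty-interior property of $\attr$ (hence the appeal to Proposition~\ref{thm:nontriv-implies-empty-interior}, and thus to both TSC and NTC) is needed in step (ii).
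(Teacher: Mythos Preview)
Your proof is correct and follows essentially the same route as the paper. For (i), the paper simply cites Lemma~\ref{thm:alt-generated-tiling-boundary-decomp} as a corollary, whose proof contains exactly the argument you spelled out; for (ii) the paper uses a trivial case split ($x \in \bd T$ versus $x \in T$) where you instead verify the two defining conditions of $T_{-\eps}$ directly, and (iii) is the same one-liner in both.

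One minor correction worth noting: your appeal to Proposition~\ref{thm:nontriv-implies-empty-interior} in step (ii) is unnecessary. By Definition~\ref{def:eps-parallel-set}, membership $x \in \attr_\eps$ always entails $d(x,\attr) \le \eps$, regardless of whether $\inn\attr$ is empty; the restriction to $\cj{\attr^\complement}$ only \emph{removes} points from the usual $\eps$-neighbourhood, it never weakens the distance inequality for the points that remain. Hence the ``subtlety'' you flag at the end is a non-issue, and (ii) does not require $\inn\attr = \emptyset$ beyond what is already needed for the tiling to exist. The paper's proof of (ii) accordingly makes no use of it.
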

\begin{proof}
  (i) This is a corollary of Lemma~\ref{thm:alt-generated-tiling-boundary-decomp}. 

  (ii) Fix $\eps\ge 0$. Let $x \in \attr_\eps\cap \hull$. Then,
  since $x \in \hull=\overline{T}$, either $x \in \bd T$ or $x \in
  T$. In the former case $x \in T_{-\eps}$ is obvious, since $\bd
  T \ci T_{-\eps}$. In the latter case there exists a point
  $y \in \attr$ with $d(x,y)\le\eps$. By (i), $y$ is in $\bd T$
  and so $d(x,\bd T)\le \eps$. Hence $x \in T_{-\eps}$, completing
  the proof of (ii).

  (iii) is an immediate consequence of the inclusion
  $\attr\ci \hull$.
\end{proof}

In Theorem~\ref{thm:parallel-set-compatibility} and Theorem~\ref{thm:relating-ext-bd-to-compatibility-thm}, we characterize the situation in which one has the helpful disjoint decomposition
\linenopar
\begin{align}\label{eqn:eps-parallel-set-decomp}
  \attr_\eps = T_{-\eps}\cup (\hull_\eps\setminus \hull).
\end{align}
The decomposition \eqref{eqn:eps-parallel-set-decomp} is ensured by \eqref{itm:parsetcomp-FcapC-is-innernbd} and \eqref{itm:parsetcomp-FcapCc-is-outernbd}, and the other conditions \eqref{itm:parsetcomp-bdT-is-F}--\eqref{itm:parsetcomp-bdG-in-F} provide easy-to-check criteria for when this holds. See also Theorem~\ref{thm:relating-ext-bd-to-compatibility-thm} for two more equivalent conditions.

\begin{theorem}[Compatibility Theorem] \label{thm:parallel-set-compatibility}
  Let \attr be the self-affine set associated to the system $\simtset$ which \sats TSC and NTC. Then the following assertions are equivalent:
  \begin{enumerate}
    \item \label{itm:parsetcomp-bdT-is-F} $\bd T = \attr$.
    \item \label{itm:parsetcomp-bdC-in-F} $\bd \hull \ci \attr$.
    \item \label{itm:parsetcomp-bdPhiCcomp-in-F} $\bd (\hull\setminus \simt(\hull))\ci \attr$.
    \item \label{itm:parsetcomp-bdG-in-F} $\bd G_q \ci \attr$ for all $q \in Q$.
    \item \label{itm:parsetcomp-FcapC-is-innernbd} $\attr_\eps\cap \hull=T_{-\eps}$ for all $\eps\ge 0$.
    \item \label{itm:parsetcomp-FcapCc-is-outernbd} $\attr_\eps\cap \hull^\complement= \hull_\eps\setminus \bd \hull$ for all $\eps\ge 0$.
  \end{enumerate}
\end{theorem}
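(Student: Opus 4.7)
\smallskip

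My plan is to establish the equivalence by organizing the six conditions into three blocks: the ``boundary'' conditions \eqref{itm:parsetcomp-bdT-is-F}--\eqref{itm:parsetcomp-bdG-in-F}, the ``inner'' parallel condition \eqref{itm:parsetcomp-FcapC-is-innernbd}, and the ``outer'' parallel condition \eqref{itm:parsetcomp-FcapCc-is-outernbd}. I would first prove the cycle \eqref{itm:parsetcomp-bdT-is-F} $\Leftrightarrow$ \eqref{itm:parsetcomp-bdC-in-F} $\Leftrightarrow$ \eqref{itm:parsetcomp-bdPhiCcomp-in-F} $\Leftrightarrow$ \eqref{itm:parsetcomp-bdG-in-F}, then attach \eqref{itm:parsetcomp-FcapC-is-innernbd} to \eqref{itm:parsetcomp-bdT-is-F} and \eqref{itm:parsetcomp-FcapCc-is-outernbd} to \eqref{itm:parsetcomp-bdC-in-F}.

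The key identity for the boundary block is $\bd(\hull\setminus\simt(\hull)) = \bd T_0$, which holds because $T_0 = \inn\hull \setminus \simt(\hull)$ while $\hull\setminus\simt(\hull) = T_0 \cup (\bd\hull\setminus\simt(\hull))$, and every point of $\bd\hull\setminus\simt(\hull)$ is a limit point of $T_0$ (a small ball around such a point misses the closed set $\simt(\hull)$ and must contain points of $\inn\hull$). Applying Lemma~\ref{thm:opentilinglem1} to the tiling of $T_0$ by its components $\{G_q\}$ gives $\bd T_0 = \cj{\bigcup_q \bd G_q}$, so together with the closedness of $\attr$ these two observations make \eqref{itm:parsetcomp-bdPhiCcomp-in-F} and \eqref{itm:parsetcomp-bdG-in-F} equivalent. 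To pass between \eqref{itm:parsetcomp-bdPhiCcomp-in-F} and \eqref{itm:parsetcomp-bdT-is-F}, I iterate: if $\bd T_0 \subseteq \attr$, then for every $w\in W$, $\bd\simt_w T_0 = \simt_w(\bd T_0) \subseteq \simt_w(\attr) \subseteq \attr$, and by the identity $\bd T = \cj{\bigcup_{w\in W}\bd\simt_w T_0}$ (from Lemma~\ref{thm:opentilinglem1} applied to $\tiling'$ and the fact that $\bd T = \bd T'$), closedness of $\attr$ yields $\bd T \subseteq \attr$; the reverse inclusion $\attr \subseteq \bd T$ is Proposition~\ref{parsetprop}(i). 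For the converse direction, $\bd T_0 \subseteq \bd T$. Finally, \eqref{itm:parsetcomp-bdC-in-F} $\Leftrightarrow$ \eqref{itm:parsetcomp-bdPhiCcomp-in-F} follows from $\bd(\hull\setminus\simt(\hull)) \subseteq \bd\hull \cup \bigcup_j \simt_j(\bd\hull)$ for one direction, and from the already-established inclusion $\bd\hull \subseteq \bd T$ (since tiles lie inside $\inn\hull$ by nestedness) for the other.

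For \eqref{itm:parsetcomp-bdT-is-F} $\Leftrightarrow$ \eqref{itm:parsetcomp-FcapC-is-innernbd}, the ``$\Leftarrow$'' direction is immediate by specializing to $\eps = 0$: since $\attr$ has empty interior (Proposition~\ref{thm:nontriv-implies-empty-interior}), $\attr_0 = \bd\attr = \attr$, while $T_{-0} = \bd T$, and $\attr \subseteq \hull$. For ``$\Rightarrow$'', Proposition~\ref{parsetprop}(ii) already gives $\attr_\eps \cap \hull \subseteq T_{-\eps}$; for the reverse I note that, assuming $\bd T = \attr$, any $x \in T_{-\eps}$ lies in $\cj{T} = \hull$ and satisfies $d(x, T^\complement) \leq \eps$, where $T^\complement \supseteq \bd T = \attr$ and (since $T$ is open) the nearest point of $T^\complement$ to any $x\in T$ lies on $\bd T$; hence $d(x,\attr) = d(x, \bd T) = d(x,T^\complement) \le \eps$. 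The equivalence \eqref{itm:parsetcomp-bdC-in-F} $\Leftrightarrow$ \eqref{itm:parsetcomp-FcapCc-is-outernbd} is proved analogously: the forward direction uses that any point in $\hull_\eps \setminus \bd\hull$ has its nearest point on $\bd\hull \subseteq \attr$; the reverse extracts an approximating sequence in $\hull^\complement$ near any $z\in\bd\hull$ and applies the inclusion to get $d(z,\attr)=0$, i.e.\ $z \in \attr$.

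The main obstacle is the passage between the level-0 boundary $\bd T_0$ and the global boundary $\bd T$, which requires careful use of the self-affine structure together with Lemma~\ref{thm:opentilinglem1}: one must be sure that the closure operation in $\bd T = \cj{\bigcup_w \bd\simt_w T_0}$ does not produce new points outside $\attr$, which is exactly where closedness of $\attr$ and the invariance $\simt_w(\attr)\subseteq\attr$ are both essential.
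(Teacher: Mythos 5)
Your proof is correct, and it rests on the same pillars as the paper's: Lemma~\ref{thm:opentilinglem1}, Proposition~\ref{parsetprop} for the ``always true'' inclusions, the invariance $\simt_w(\attr)\ci\attr$, and closedness of \attr. The differences are in the routing and in two local arguments, and they are worth noting. The paper proves the boundary block as a one-way cycle (i)$\Rightarrow$(ii)$\Rightarrow$(iii)$\Rightarrow$(iv)$\Rightarrow$(i), passing from generators to tiles via $\bd\simt_w G_q=\simt_w\bd G_q$; you instead establish the two-sided identity $\bd(\hull\setminus\simt(\hull))=\bd T_0$ (which needs $\hull=\cj{\inn\hull}$, available from Remark~\ref{rem:convex-hull-and-affine-hull}) and iterate at the level of the coarser tiling $\tiling'$ using $\bd T=\cj{\bigcup_w\bd\simt_w T_0}$ -- both legitimate, and your version makes explicit an equality the paper only uses as an inclusion. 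More substantively, your handling of (v)$\Rightarrow$(i) by specializing to $\eps=0$ (where $\attr_0=\attr$ because $\inn\attr=\es$ by Proposition~\ref{thm:nontriv-implies-empty-interior}, and $T_{-0}=\bd T$) is a genuinely different and slicker argument than the paper's contrapositive via a point of $\bd G_q\setminus\attr$; likewise your (vi)$\Rightarrow$(ii) by taking a sequence in $\hull^\complement$ converging to $z\in\bd\hull$ and letting $\eps\downarrow 0$ replaces the paper's contrapositive with a direct limit argument. What the paper's contrapositive proofs buy is slightly more information -- they show that failure of (ii) or (iv) breaks (v) or (vi) already for a single small $\eps$ -- whereas your versions are shorter and avoid the $\gd/2$ bookkeeping. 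One small point to make airtight in your (i)$\Rightarrow$(v) step: for $x\in T$ the identity $d(x,T^\complement)=d(x,\bd T)$ should be justified by noting that a nearest point of the closed set $T^\complement$ cannot be interior to $T^\complement$; you state this correctly, and the case $x\in\bd T$ is covered by $\bd T=\attr\ci\attr_\eps$.
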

\begin{proof}
  We show the inclusions $\eqref{itm:parsetcomp-bdT-is-F}\Rightarrow \eqref{itm:parsetcomp-bdC-in-F}\Rightarrow \eqref{itm:parsetcomp-bdPhiCcomp-in-F}\Rightarrow
  \eqref{itm:parsetcomp-bdG-in-F}\Rightarrow \eqref{itm:parsetcomp-bdT-is-F}$, then $\eqref{itm:parsetcomp-bdT-is-F}\Rightarrow \eqref{itm:parsetcomp-FcapC-is-innernbd}\Rightarrow
  \eqref{itm:parsetcomp-bdG-in-F}$, and  $\eqref{itm:parsetcomp-bdC-in-F}\Leftrightarrow\eqref{itm:parsetcomp-FcapCc-is-outernbd}$.

  $\eqref{itm:parsetcomp-bdT-is-F}\Rightarrow \eqref{itm:parsetcomp-bdC-in-F}$. Observe that $\bd \hull\ci \bd T$.

  $\eqref{itm:parsetcomp-bdC-in-F}\Rightarrow\eqref{itm:parsetcomp-bdPhiCcomp-in-F}$. Assume that $\bd \hull\ci \attr$.
  Then also $\bd \simt(\hull)\ci \simt(\bd \hull)\ci
  \simt(\attr)=\attr$ and so $\bd(\hull\setminus
  \simt(\hull))\ci \bd \hull \cup \bd \simt(\hull)
 \ci \attr$. (Here we used that, for $A,B\ci\R^d$,
  $\bd(A\cup B)\ci \bd A \cup \bd B$ and $\bd(A\setminus
  B)\ci \bd A\cup \bd B$.)

  $\eqref{itm:parsetcomp-bdPhiCcomp-in-F} \Rightarrow \eqref{itm:parsetcomp-bdG-in-F}$.
  Assume that $\bd(\hull\setminus
  \simt(\hull))\ci \attr$. The generators $G_q$ (being the
  connected components of the open set $ \inn{(\hull\setminus
  \simt(\hull))}$) form an open tiling of the set
  $ \inn{(\hull\setminus \simt(\hull))}$. Therefore, by
  Lemma~\ref{thm:opentilinglem1}, $\bd G_q\ci \bigcup_q \bd G_q
 \ci \overline{\bigcup_q \bd G_q} = \bd(\bigcup_q G_q) =
  \bd( \inn{(\hull\setminus \simt(\hull))}) \ci \bd(\hull\setminus \simt(\hull))$.
  Hence $\bd G_q\ci \bd(\hull\setminus \simt(\hull))\ci
  \attr$ for each $q$, showing \eqref{itm:parsetcomp-bdG-in-F}.

  $\eqref{itm:parsetcomp-bdG-in-F}\Rightarrow \eqref{itm:parsetcomp-bdT-is-F}$. Let $\bd G_q \ci \attr$ for all
  $q \in\{1,\dots,Q\}$. It suffices to show that this implies
  $\bd T\ci \attr$, the reversed inclusion being always
  true, cf.\ Proposition~\ref{parsetprop} \eqref{itm:parsetcomp-bdT-is-F}. By definition of
  the tiles, $R^i=\simt_w G_q$ for some $w \in W$ and some $q$ and
  thus we have $\bd R^i=\bd \simt_w G_q = \simt_w \bd
  G_q\ci \simt_w \attr\ci \attr$ for each $i \in \N$.
  But this implies $\overline{\bigcup_i \bd R^i}\ci \attr$,
  since \attr is closed. Finally, since, by
  Lemma~\ref{thm:opentilinglem1}, $\bd T= \overline{\bigcup_i \bd
  R^i}$, assertion \eqref{itm:parsetcomp-bdT-is-F} follows.

  $\eqref{itm:parsetcomp-bdT-is-F}\Rightarrow \eqref{itm:parsetcomp-FcapC-is-innernbd}$. By Proposition~\ref{parsetprop} \eqref{itm:parsetcomp-bdG-in-F}, it
  suffices to show the inclusion $T_{-\eps}\ci
  \attr_\eps\cap \hull$ for each $\eps\ge0$. So fix $\eps\ge 0$
  and let $x \in T_{-\eps}$. Then, clearly, $x \in \hull$.
  Moreover, either $x \in\bd T$ or $x \in R^i$ for some $i \in\N$
  and $d(x,\bd R^i)\le\eps$. Both cases imply $x \in \attr_\eps$,
  the former since, by \eqref{itm:parsetcomp-bdT-is-F}, $\bd T=\attr\ci \attr_\eps$,
  and the latter since $\bd R^i\ci \bigcup_j \bd R^j
 \ci \bd T= \attr$ and so $d(x,\attr)\le d(x,\bd R^i)\le
  \eps$.

  $\eqref{itm:parsetcomp-FcapC-is-innernbd}\Rightarrow \eqref{itm:parsetcomp-bdG-in-F}$ (by contraposition). Assume that \eqref{itm:parsetcomp-bdG-in-F} is
  false, i.e.\ assume there exists some index $q$ and some $x \in
  \bd G_q$ such that $x\notin \attr$. Then, since \attr is
  closed, there is some number $\delta>0$ such that $d(x,
  \attr)>\delta$ and so $x\notin \attr_\eps$ for $\eps\le\delta$.
  On the other hand, $x \in \bd G_q$ clearly implies $x \in
  T_{-\eps}$. Hence the equality in \eqref{itm:parsetcomp-FcapC-is-innernbd} does not hold.

  $\eqref{itm:parsetcomp-bdC-in-F}\Rightarrow \eqref{itm:parsetcomp-FcapCc-is-outernbd}$. For $\eps=0$ there is nothing to prove.
  So let $\eps>0$ and $x \in \hull_\eps\setminus \bd \hull$. Then
  there exists a point $y \in\bd \hull$ such that  $d(x,y)\le
  \eps$. By \eqref{itm:parsetcomp-bdC-in-F}, $y \in \attr$ and thus $d(x,\attr)\le \eps$,
  i.e.\ $x \in \attr_\eps$. Hence $\hull_\eps \ci \attr_\eps
  \cap \hull^\complement$. The reversed inclusion is always true, cf.\
  Proposition~\ref{parsetprop} \eqref{itm:parsetcomp-bdPhiCcomp-in-F}, and so assertion \eqref{itm:parsetcomp-FcapCc-is-outernbd} follows.


  $\eqref{itm:parsetcomp-FcapCc-is-outernbd}\Rightarrow \eqref{itm:parsetcomp-bdC-in-F}$ (by contraposition). Assume \eqref{itm:parsetcomp-bdC-in-F} is false,
  i.e.\ there exists a point $x \in \bd \hull$ such that $x\notin
  \attr$.
  Let $\delta:=d(x,\attr)$ and fix some $\eps<\frac\delta 2$. Since $x \in\bd \hull$, there are points in $\hull^\complement$ arbitrarily close to $x$. Choose $y \in \hull_\eps\cap \hull^\complement$. Then $d(y,\attr)\ge d(x,\attr)-d(x,y)> \eps$, implying $y\not \in \attr_\eps$. Hence the equality $\attr_\eps\cap \hull^\complement=\hull_\eps\setminus \bd \hull$ can not be true for this $\eps$, i.e., \eqref{itm:parsetcomp-FcapCc-is-outernbd} does not hold.
\end{proof}
  %

Note that the assertions \eqref{itm:parsetcomp-bdC-in-F}, \eqref{itm:parsetcomp-bdPhiCcomp-in-F} and \eqref{itm:parsetcomp-bdG-in-F} are very simple and easy to check. So, in particular, Theorem~\ref{thm:parallel-set-compatibility}
states that if one of the assertions \eqref{itm:parsetcomp-bdC-in-F}, \eqref{itm:parsetcomp-bdPhiCcomp-in-F} or  \eqref{itm:parsetcomp-bdG-in-F} is true for a given self-affine set \attr, then each of its parallel sets $\attr_\eps$ is the disjoint union of the two sets $T_{-\eps}$ and $\hull_\eps\setminus \hull$, cf.~(\ref{eqn:eps-parallel-set-decomp}).
Moreover, if for some \attr, it can be shown that one of the assertions \eqref{itm:parsetcomp-bdC-in-F}, \eqref{itm:parsetcomp-bdPhiCcomp-in-F} or \eqref{itm:parsetcomp-bdG-in-F} is false, then the inner parallel set $T_{-\eps}$ of the tiling does not describe the set $\attr_\eps \cap \hull$ and also the sets $\attr_\eps\cap \hull^\complement$ and $\hull_\eps\setminus\hull$ are different. Thus, for any set \attr not satisfying the assertions of
Theorem~\ref{thm:parallel-set-compatibility}, $\attr_\eps$ does not coincide with $T_{-\eps} \cup \hull_\eps$, and one cannot use the tiling directly to study the parallel sets $\attr_\eps$.

\subsection*{The envelope}
We consider another hull operation, the \emph{envelope}, and show that the conditions of the compatibility theorem are met for a self-affine set \attr precisely when its envelope coincides with its convex hull. At the end of \S\ref{sec:Generalizing-the-compatibility-theorem}, we examine the feasibility of the envelope as a replacement for the convex hull in the tiling construction; cf. Proposition~\ref{prop:no-compatible-open-set} and the ensuing discussion. There are many cases where Theorem~\ref{thm:parallel-set-compatibility} does not apply for the tiling as constructed using the convex hull, but the analogous result (Theorem~\ref{thm:Generalized-parallel-set-compatibility}) \emph{does} apply when the convex hull is replaced by the envelope.

\begin{defn}\label{def:exterior-boundary} \label{def:envelope}
  Let $K\subset\R^d$ be a compact set. $K^\complement$ has a unique unbounded component, which we call $U$. (For $d=1$, there are actually two unbounded components in $K^\complement$, if $+ \infty$ and $- \infty$ are not identified. In this case let $U$ be their union.) Then $\bd U$ is the \emph{exterior boundary} of $K$; it consists of that portion of (the boundary of) $K$ which is accessible when approaching $K$ from infinity.
  The \emph{envelope} $\env=\env(K)$ of $K$ is the complement of $U$, $\env := U^\complement$.
\end{defn}

\begin{exm}\label{exm:envelopes}
  The envelope of the Sierpinski gasket is its convex hull, as is the envelope of the Sierpinski carpet. The envelope of the Koch curve is the Koch curve itself, as is the envelope of the attractor depicted in Figure~\ref{fig:tileset_counterexample}. Some more interesting (and non-convex) envelopes are shown in Figure~\ref{fig:envelope-examples}; for a description of these sets cf.~\cite{LlorWin}. 

  \begin{figure}
    \centering
 \includegraphics{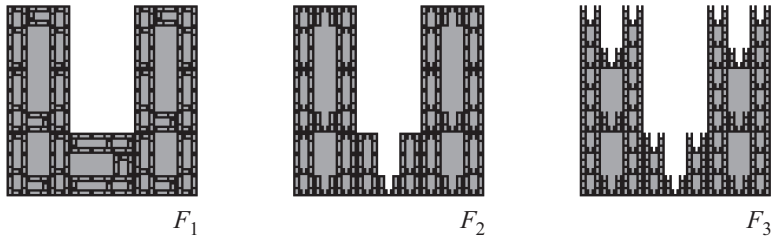}
    \caption{\captionsize Three self-similar sets and their envelopes (the shaded region, including the attractor itself). $\attr_3$ is the attractor of a system $\simt^{(1)}$ of 7 mappings, each with scaling ratio $\frac13$ and no rotation. To make $\attr_2$, we have given two of the mappings a rotation of $\gp$ (top left and top right). To make $\attr_1$, we have additionally given one of the mappings a rotation of $\frac\gp2$ (bottom center).}
    \label{fig:envelope-examples}
  \end{figure}
\end{exm}

\begin{lemma}\label{thm:envelope-wraps-attr}\label{thm:env-contained-in-hull}
Let $K\subset \R^d$ be a compact set.
 The envelope $\env$ of $K$ is compact and satisfies $\bd \env \ci K \ci \env$. Moreover, $\env \ci [K]$, where $[K]$ is the convex hull of $K$.
\end{lemma}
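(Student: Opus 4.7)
The plan is to verify each claim directly from the definition $\env = U^\complement$, where $U$ is the (unique) unbounded component of $K^\complement$.

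For compactness, I would first note that $U$ is open, since connected components of the open set $K^\complement$ are open, so $\env$ is closed. For boundedness, since $K$ is compact there is a ball $B(0,R) \supseteq K$; its complement $B(0,R)^\complement$ is open, connected, unbounded, and lies in $K^\complement$, so it is contained in some unbounded component, namely $U$. Hence $\env = U^\complement \subseteq B(0,R)$, and $\env$ is compact. The inclusion $K \subseteq \env$ is immediate, because $U \subseteq K^\complement$ forces $K \subseteq U^\complement = \env$.

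For $\bd \env \subseteq K$, I would argue by contradiction: suppose $x \in \bd \env$ but $x \notin K$. Since $\env$ is closed, $x \in \env$, and since $K^\complement$ is open there is a ball $V := B(x,r) \subseteq K^\complement$. Because $V$ is connected, it lies entirely in a single connected component $C$ of $K^\complement$. Now $x \in \env = U^\complement$, so $x \notin U$; hence $C \neq U$, and by disjointness of distinct components $C \cap U = \es$, i.e.\ $C \subseteq U^\complement = \env$. Then $V \subseteq \env$, making $x$ an interior point of $\env$, which contradicts $x \in \bd \env$.

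Finally, for $\env \subseteq [K]$, suppose $x \in \env \setminus [K]$. Since $[K]$ is closed and convex, the Hahn--Banach separation theorem produces an open half-space $H$ with $x \in H$ and $H \cap [K] = \es$; in particular $H \subseteq K^\complement$. But $H$ is open, connected, and unbounded, so it lies in a single unbounded component of $K^\complement$, which must be $U$ by uniqueness. Hence $x \in H \subseteq U$, contradicting $x \in \env = U^\complement$. The main (very mild) obstacle is simply keeping straight which sets are open and connected at each step so that the ``connected subset of an open set lies in one component'' argument applies cleanly; the topological separation argument and hyperplane separation do all the real work.
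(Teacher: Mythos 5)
Your proof is correct. The paper actually states this lemma without providing a proof, so there is nothing to compare against line by line; your argument supplies exactly what is missing, and each step (components of the open set $K^\complement$ are open, hence $\env=U^\complement$ is closed; the complement of a large ball containing $K$ lies in $U$, giving boundedness; the ``connected subset of an open set lies in a single component'' argument for $\bd\env\ci K$; hyperplane separation for $\env\ci[K]$) is sound. For the last inclusion the authors had in mind a slightly more elementary route (visible only in a commented-out draft): given $x\in\env$, write $x$ as a combination of two points of $U$ and intersect the line through them with $\bd U\ci K$ to exhibit $x$ as a convex combination of points of $K$; your separation argument is cleaner and avoids the fussing about choosing the intersection points so that $x$ lies between them. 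The only point worth tightening is the case $d=1$, which the paper's Definition of the envelope treats specially by letting $U$ be the \emph{union} of the two unbounded components: there the complement of a closed ball and an open half-space are not connected (resp.\ a half-line), so ``connected, hence contained in the unique unbounded component'' should be replaced by ``each unbounded connected piece lies in some unbounded component, hence in $U$''; the conclusions are unaffected. Also make sure $B(0,R)$ denotes the closed ball so that its complement is open, as you assert.
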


The following results indicate that the conditions of Theorem~\ref{thm:parallel-set-compatibility} are satisfied precisely when a self-affine set \attr appears convex when seen ``from outside''.

\begin{prop}\label{thm:exterior-boundary}
  Let $\ghull$ be a compact set in $\R^d$ with envelope $\env$ and convex hull $[\ghull]$.
  Then $\env$ is convex iff $\env =[\ghull]$.
\end{prop}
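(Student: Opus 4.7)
The plan is to use Lemma~\ref{thm:envelope-wraps-attr} as the main input; both directions are then essentially immediate.

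For the $(\Leftarrow)$ direction, if $\env = [\ghull]$, then $\env$ inherits convexity from the convex hull, and there is nothing more to prove.

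For the $(\Rightarrow)$ direction, assume $\env$ is convex. By Lemma~\ref{thm:envelope-wraps-attr} we already have the inclusion $\env \subseteq [\ghull]$, so it suffices to establish the reverse inclusion $[\ghull] \subseteq \env$. The same lemma gives $\ghull \subseteq \env$. Since $U$ is open in $\R^d$ (being a component of the open set $\ghull^\complement$), its complement $\env$ is closed; thus $\env$ is a closed convex set containing $\ghull$. Because $[\ghull]$ is the intersection of all closed convex sets containing $\ghull$ (or, equivalently in this setting, the smallest such set), we conclude $[\ghull] \subseteq \env$, and combining the two inclusions gives $\env = [\ghull]$.

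There is no real obstacle here: the content has been packed into Lemma~\ref{thm:envelope-wraps-attr}, which supplies both the compactness/closedness of $\env$ and the sandwich $\ghull \subseteq \env \subseteq [\ghull]$. The only thing to keep in mind is that $\env$ is automatically closed (so one may invoke the characterization of $[\ghull]$ as the smallest closed convex set containing $\ghull$ without any extra argument).
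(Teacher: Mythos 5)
Your proposal is correct and follows essentially the same route as the paper: both use Lemma~\ref{thm:env-contained-in-hull} to get the sandwich $\ghull \subseteq \env \subseteq [\ghull]$ and then deduce $[\ghull] \subseteq \env$ from the convexity of $\env$ (the paper writes this as $[\ghull]\subseteq[\env]=\env$, while you invoke the minimality of the convex hull among closed convex supersets — the same observation). The aside about closedness is harmless but not needed, since $[\ghull]$ is by definition the smallest \emph{convex} set containing $\ghull$, so any convex superset suffices.
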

  \begin{proof}
    If $\env = [\ghull]$ then $\env$ is obviously convex.
    For the other implication, assume $\env$ is convex. By Lemma~\ref{thm:env-contained-in-hull}, we have $\env \ci [K]$ and, moreover, $K \ci \env$. The latter implies $[K]\subset [\env]$ and, since $\env=[\env]$, also the reversed inclusion $[K]\subseteq \env$ is proved.  
\end{proof}

Now we have two more ``compatibility conditions'' to accompany those already established in Theorem~\ref{thm:parallel-set-compatibility}. Let $E$ denote the envelope of \attr and let \hull be its convex hull, as before.

\begin{theorem}\label{thm:relating-ext-bd-to-compatibility-thm}
  Each of the following two conditions is equivalent to any of the conditions (i)--(vi) of Theorem~\ref{thm:parallel-set-compatibility}:
  \begin{enumerate}[{\rm(a)}]
    \item $ \env = \hull$.
    \item $ \env$ is convex.
  \end{enumerate}
  \begin{proof}
    We prove \eqref{itm:parsetcomp-bdC-in-F}$\Rightarrow$(b) and (a)$\Rightarrow$\eqref{itm:parsetcomp-bdC-in-F}, where \eqref{itm:parsetcomp-bdG-in-F} and \eqref{itm:parsetcomp-bdC-in-F} are the conditions from Theorem~\ref{thm:parallel-set-compatibility}. Note that  (a) is equivalent to (b) by Proposition~\ref{thm:exterior-boundary}.

    \eqref{itm:parsetcomp-bdC-in-F}$\Rightarrow$(b), by contraposition. If $ \env$ is not convex, then $ \inn \hull \less \env$ is nonempty and so there must exist a point $x \in \inn\,\hull\cap U$, i.e., $x$ is in the unbounded connected component $U$ of $\attr^\complement$. Hence there must be a path in $U$ connecting $x$ to infinity. Since $x \in \inn\,\hull$ this path crosses $\bd \hull$, implying the existence of a point $y \in\bd \hull$ which is not in \attr. Hence condition \eqref{itm:parsetcomp-bdC-in-F} of  Theorem~\ref{thm:parallel-set-compatibility} does not hold.

    (a)$\Rightarrow$\eqref{itm:parsetcomp-bdC-in-F}. Note that $ \env = \hull$ is true iff $\bd  U = \bd  \hull$. Since $\bd U \ci \attr$, we have $\bd \hull \ci \attr$, which is condition \eqref{itm:parsetcomp-bdC-in-F}.
  \end{proof}
\end{theorem}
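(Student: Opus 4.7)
The plan is to splice (a) and (b) into the cycle of equivalences already in place from Theorem~\ref{thm:parallel-set-compatibility}. Since Proposition~\ref{thm:exterior-boundary} delivers (a)$\Leftrightarrow$(b) for free, it suffices to connect (a)/(b) to the existing chain at a single point. The natural anchor is condition \eqref{itm:parsetcomp-bdC-in-F}, $\bd\hull\ci\attr$, which concerns the very same object as (a) and (b) — the boundary of the convex hull.

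For (a)$\Rightarrow$\eqref{itm:parsetcomp-bdC-in-F} I would simply note that $\env=\hull$ forces $\bd\hull=\bd\env$, and by Lemma~\ref{thm:envelope-wraps-attr} the boundary of the envelope is contained in $\attr$. For the reverse direction \eqref{itm:parsetcomp-bdC-in-F}$\Rightarrow$(b) I argue by contraposition. Assume $\env$ is not convex. Since $\env\ci\hull$ by Lemma~\ref{thm:envelope-wraps-attr} and $\hull$ is convex, $\env$ must be a proper subset of $\hull$, so $U\cap\hull=\hull\setminus\env$ is nonempty. The key intermediate claim is $U\cap\inn\hull\neq\es$: any $z\in U\cap\hull$ either already lies in $\inn\hull$, or lies on $\bd\hull$, in which case the openness of $U$ together with the fact that $\hull$ is a convex body of full dimension (Remark~\ref{rem:convex-hull-and-affine-hull}) forces a $U$-neighborhood of $z$ to meet $\inn\hull$. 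Fixing $x\in U\cap\inn\hull$, the fact that $U$ is an open connected subset of $\R^d$ makes it path-connected; since $U$ is unbounded and $\hull$ is bounded, I can join $x$ to some point of $U\setminus\hull$ by a continuous path in $U$. This path must cross $\bd\hull$ at some $y\in U\cap\bd\hull$; but $U\ci\attr^\complement$, so $y\in\bd\hull\setminus\attr$, contradicting \eqref{itm:parsetcomp-bdC-in-F}.

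The main obstacle is the intermediate step $U\cap\inn\hull\neq\es$. It combines two observations that are slightly subtle but correct: $\hull$ has nonempty interior (a standing assumption via Remark~\ref{rem:convex-hull-and-affine-hull}), and a nonempty open subset of $\R^d$ cannot meet a convex body only along its $(d-1)$-dimensional boundary. Everything else — $\bd\env\ci\attr$, path-connectedness of the open connected set $U$, and the intermediate-value crossing of $\bd\hull$ — is either routine planar topology or already available in the paper's earlier lemmas.
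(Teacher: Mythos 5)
Your proposal is correct and follows essentially the same route as the paper: both splice (a) and (b) into the existing cycle at condition (ii) via Proposition~\ref{thm:exterior-boundary}, prove (a)$\Rightarrow$(ii) from $\bd\env\ci\attr$, and prove (ii)$\Rightarrow$(b) by contraposition using a path in $U$ from a point of $U\cap\inn\hull$ to infinity that must cross $\bd\hull$ outside \attr. The only difference is that you reach $U\cap\inn\hull\neq\es$ in two steps (first $U\cap\hull\neq\es$, then push into the interior using $\hull=\cj{\inn\hull}$), whereas the paper gets it in one step from $\inn\hull\setminus\env=\inn\hull\cap U$ being nonempty; both are valid.
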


 \section{Generalization of the tiling construction}
\label{sec:Generalization-of-the-tiling-construction}
While the non-triviality condition is not very restrictive, the tileset condition puts a serious constraint on the class of sets for which the canonical tiling exists. For the purpose of obtaining tube formulas for \attr, the compatibility conditions need to be satisfied; this limits the applicability of the tiling construction even further. It is natural to ask whether the tiling construction can be modified to work for more general sets. The NTC and TSC are both necessary restrictions, and each is given in terms of the convex hull \hull of \attr. While neither condition can be omitted, they \emph{can} be applied to a different initial set for the tiling construction, in place of \hull. Provided \attr satisfies OSC, it turns out that any feasible open set $O$ of \attr can be used as the initial set. In this section, we show that the tiling construction can still be carried out in this generalized setting. In the next section, we examine the analogue of the compatibility theorem for this generalization.

The main result of this section is Theorem~\ref{thm:generaltiling}, which can be paraphrased as follows: if \attr is a self-affine set with empty interior and which satisfies the OSC with feasible set $O$, then there exists a self-affine tiling of $\cj{O}$. In other words, we generalize the tiling construction by replacing the convex hull with the set $\ghull = \cj{O}$, where $O$ is an arbitrary feasible open set for \attr. The open set condition takes the role of the tileset condition and we obtain an open tiling of \ghull. 
The canonical self-affine tiling of the convex hull of \attr appears as the special case of this construction in case $\inn \hull$ is a feasible open set. We will need the following well known fact.

\begin{prop}[cf.\ Hutchinson {\cite[5.1(3)(ii)]{Hut}}]
  \label{thm:F-in-clo(O)}
  If \attr satisfies OSC with feasible open set $O$, then $\attr \ci \cj O$.
\end{prop}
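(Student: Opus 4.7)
The plan is to show that every point of $\attr$ is a limit of points of $O$, from which $\attr \ci \cj O$ follows by the closedness of $\cj O$.

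First, by a straightforward induction on the length of $w \in W$, I would establish that $\simt_w(O) \ci O$ for every finite word $w$. The base case $|w|=1$ is precisely the OSC containment \eqref{eqn:def:OSC-containment}, and the inductive step reads $\simt_{uj}(O) = \simt_u(\simt_j(O)) \ci \simt_u(O) \ci O$, using \eqref{eqn:def:OSC-containment} once more.

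Next, I would invoke the classical fact of Hutchinson \cite{Hut} that for any nonempty compact set $K \ci \bRd$, the iterates $\simt^k(K)$ converge to $\attr$ in the Hausdorff metric on compact subsets of $\bRd$. Choosing any $x_0 \in O$ (possible because $O$ is nonempty) and setting $K := \{x_0\}$, each iterate
\[
  \simt^k(\{x_0\}) = \{\simt_w(x_0) \suth w \in \{1,\dots,N\}^k\}
\]
is a finite subset of $O$ by the previous step. Since Hausdorff convergence of a sequence of subsets of the closed set $\cj O$ forces its limit to lie in $\cj O$, we conclude $\attr \ci \cj O$.

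No substantial obstacle arises here; the argument is essentially an immediate combination of iterated OSC invariance with Hutchinson's convergence theorem. The only point that deserves explicit mention is the observation that the Hausdorff limit of a sequence of sets contained in a given closed set must itself be contained in that closed set, which follows at once from the definition of Hausdorff distance (every point of the limit is approximated arbitrarily well by points of the approximating sets, hence lies in the closure, which equals the set itself).
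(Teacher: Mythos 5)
Your proof is correct. The paper gives no argument of its own for this proposition---it is simply quoted from Hutchinson \cite{Hut}---and your derivation (iterated invariance $\simt_w(O) \ci O$ together with Hausdorff convergence of $\simt^k(\{x_0\})$ to \attr inside the closed set $\cj O$) is precisely the standard argument behind that citation.
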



Let $\simtset$ be a self-affine function system satisfying OSC and \attr be its attractor. Let $O$ be any feasible open set for \attr, i.e.\ $O$ satisfies
\eqref{eqn:def:OSC-containment} and \eqref{eqn:def:OSC-disjoint}. Set $\ghull:=\cj{O}$ from now on. Since $O \ci \inn(\ghull)$, it is clear that \ghull is the closure of its interior, $\ghull=\cj{ \inn(\ghull)}$, and that $\attr \ci \ghull$, by Proposition~\ref{thm:F-in-clo(O)}.
It is easily seen that \eqref{eqn:def:OSC-disjoint} implies
\begin{equation}\label{eqn:osc3}
  \simt_w(O)\cap\simt_v(O) =\emptyset \mbox{ for all } w,v \in W^j,v\neq w, j \in\N.
\end{equation}
Write $O^k:=\simt^k(O)$ and $\ghull^k:=\simt^k(\ghull)$ for $k=0,1,2,\ldots$.

\begin{prop}[Nestedness]\label{thm:nestedness-of-Ks}
  $\ghull^{k+1} \ci \ghull^k \ci \ghull.$
  \begin{proof}
    Note that \eqref{eqn:def:OSC-containment} implies $O^{k+1} \ci O^k$.
  \end{proof}
\end{prop}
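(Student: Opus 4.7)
The plan is to reduce everything to the base case $\ghull^1 \ci \ghull$, and then iterate using the monotonicity of the set-valued map $\simt$. Since the hint in the statement already points to $O^{k+1} \ci O^k$, I will work primarily with closures of the iterates of $O$.

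First I would verify directly that $\simt$ is monotone as a set map: if $A \ci B$, then $\simt_j(A) \ci \simt_j(B)$ for each $j$, hence $\simt(A) = \bigcup_j \simt_j(A) \ci \bigcup_j \simt_j(B) = \simt(B)$. Consequently, once the base case $\ghull^1 \ci \ghull^0 = \ghull$ is established, a one-line induction gives
\[
  \ghull^{k+1} = \simt(\ghull^k) \ci \simt(\ghull^{k-1}) = \ghull^k
\]
for all $k \geq 1$, proving the full chain of inclusions.

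For the base case, I would argue $\simt_j(\ghull) \ci \ghull$ for each $j$ and then take the union over $j$. Since $\ghull = \cj O$ and $\simt_j$ is continuous (it is affine), we have $\simt_j(\cj O) \ci \cj{\simt_j(O)}$. By the OSC containment \eqref{eqn:def:OSC-containment}, $\simt_j(O) \ci O$, and taking closures yields $\cj{\simt_j(O)} \ci \cj O = \ghull$. Chaining these inclusions gives $\simt_j(\ghull) \ci \ghull$, and then $\ghull^1 = \simt(\ghull) = \bigcup_j \simt_j(\ghull) \ci \ghull$.

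There is essentially no obstacle here; the only mild subtlety is the interplay between closures and the image under $\simt_j$, which is handled by the continuity of the affine contractions. The whole argument fits in a few lines, matching the terseness of the author's hint.
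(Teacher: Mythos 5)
Your proof is correct and follows essentially the same route as the paper's one-line argument: everything reduces to the OSC containment $\simt_j(O)\ci O$, with the passage to closures handled by continuity of the affine maps (indeed $\simt_j(\cj O)=\cj{\simt_j(O)}$ since each $\simt_j$ is a homeomorphism) and the chain of inclusions obtained by iterating. No gaps.
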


Proposition~\ref{thm:nestedness-of-Ks} extends \cite[Thm.~5.1]{SST} and shows that $\ghull\supseteq \ghull^1\supseteq \ghull^2 \supseteq \ldots$ is a decreasing sequence of sets which converges to \attr; note that $\attr=\bigcap_{k=0}^ \infty \ghull^k$, by the contraction principle. In analogy with the tiling construction for the convex hull, the following non-triviality condition is required for a tiling of $O$ to exist:

\begin{defn}\label{def:nontriv}
  A self-affine set \attr satisfying OSC is said to be \emph{non-trivial}, if there exists a feasible open set $O$ for \attr such that
  \begin{equation}\label{eqn:nontriv}
    O\not \ci \simt(\cj{O})
  \end{equation}
  \attr is called \emph{trivial} otherwise.
\end{defn}

In fact, non-triviality implies that \eqref{eqn:nontriv} holds for all feasible sets $O$ of \attr. This is a consequence of the following proposition which characterizes the trivial case: a set \attr is trivial iff it has interior points. Hence triviality and non-triviality are independent of the particular choice of the set $O$.

\begin{prop}[Characterization of Triviality]
  \label{thm:nontriv-equiv-conditions}
  Let \attr be a self-affine set satisfying OSC. Then the following assertions are equivalent:
  \begin{enumerate}[(i)]
  \item \attr is trivial.
  \item $ \inn \attr \neq \emptyset$.
  \item $\cj{O}=\attr$ for some feasible open set $O$ of \attr.
  \item $\cj{O}=\attr$ for each feasible open set $O$ of \attr.
  \end{enumerate}
\end{prop}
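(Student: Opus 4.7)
The strategy is to prove the cyclic chain $(\mathrm{iv})\Rightarrow(\mathrm{iii})\Rightarrow(\mathrm{ii})\Rightarrow(\mathrm{i})\Rightarrow(\mathrm{iv})$. The implications $(\mathrm{iv})\Rightarrow(\mathrm{iii})$ and $(\mathrm{iii})\Rightarrow(\mathrm{ii})$ are essentially immediate: OSC supplies at least one feasible open set $O$ to which (iv) may be applied, and if $\cj{O} = \attr$ for some feasible $O$, then $O$ is a nonempty open subset of $\attr$, so $\inn \attr \neq \es$.

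For $(\mathrm{i})\Rightarrow(\mathrm{iv})$, the key tool is Hutchinson's uniqueness theorem for the attractor. Unpacking Definition~\ref{def:nontriv}, triviality means that \emph{every} feasible open set $O$ satisfies $O \ci \simt(\cj{O})$. Since $\simt(\cj{O}) = \bigcup_{j=1}^N \simt_j(\cj{O})$ is a finite union of compact sets and hence closed, taking closures yields $\cj{O} \ci \simt(\cj{O})$. Combined with the reverse inclusion from Proposition~\ref{thm:nestedness-of-Ks}, this gives $\simt(\cj{O}) = \cj{O}$, so $\cj{O}$ is a nonempty compact fixed set of the Hutchinson operator $\simt$, whence $\cj{O} = \attr$ by uniqueness.

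The core of the argument is $(\mathrm{ii})\Rightarrow(\mathrm{i})$, which I would settle by a Lebesgue volume argument. Fix an arbitrary feasible open set $O$ and write $A_j$ for the linear part of $\simt_j$, so that $\lambda_d(\simt_j B) = |\det A_j|\,\lambda_d(B)$ for measurable $B$. The inclusion $\bigcup_j \simt_j(O) \ci O$, together with the pairwise disjointness of the $\simt_j(O)$, yields $\sum_{j=1}^N |\det A_j| \leq 1$. If $\sum_j |\det A_j| < 1$, then iterating gives $\lambda_d(\simt^k(\cj{O})) \leq \bigl(\sum_j |\det A_j|\bigr)^k \lambda_d(\cj{O}) \to 0$; since $\attr = \simt^k(\attr) \ci \simt^k(\cj{O})$ for every $k$, this forces $\lambda_d(\attr) = 0$ and hence $\inn \attr = \es$, contradicting (ii). Therefore $\sum_j |\det A_j| = 1$, in which case $\lambda_d(O) = \lambda_d(\simt(O))$, so the open set $O \setminus \cj{\simt(O)}$ has Lebesgue measure zero and must therefore be empty. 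Hence $O \ci \cj{\simt(O)} \ci \simt(\cj{O})$; since $O$ was arbitrary, this is triviality.

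The main obstacle is obtaining the correct dichotomy in $(\mathrm{ii})\Rightarrow(\mathrm{i})$; once one observes that a Lebesgue-null open set in $\bRd$ must be empty, the interplay between the measure equality $\sum_j |\det A_j| = 1$ and topological emptiness delivers exactly the inclusion $O \ci \simt(\cj{O})$ that defines triviality. Everything else in the cycle is a bookkeeping exercise built around the finiteness of $N$ (so $\simt(\cj{O})$ stays closed) and Hutchinson's uniqueness of the invariant compact set.
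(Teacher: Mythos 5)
Your proof is correct, and for the only substantive implication, $(\mathrm{ii})\Rightarrow(\mathrm{i})$, it takes a genuinely different route from the paper. The outer cycle matches: $(\mathrm{iv})\Rightarrow(\mathrm{iii})\Rightarrow(\mathrm{ii})$ are immediate in both treatments, and your $(\mathrm{i})\Rightarrow(\mathrm{iv})$ is exactly the paper's argument (take closures in $O\ci\simt(\cj{O})$, combine with the reverse inclusion from nestedness, invoke uniqueness of the invariant compact set). For the core step the paper argues by contraposition and purely metrically: from non-triviality it produces the nonempty set $T_0=O\setminus\simt(\cj{O})$, shows via OSC that $\simt^k(T_0)\cap\attr=\es$ for all $k$, and uses $d_H(\attr,\simt^k(\cj{T_0}))\to 0$ to approximate every point of \attr by points of the complement, so $\inn\attr=\es$. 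You instead run a Lebesgue-volume argument: disjointness of the images $\simt_j(O)$ inside $O$ forces $\sum_j|\det A_j|\le 1$; if the sum is strictly less than $1$ then \attr is Lebesgue-null and hence has empty interior, while if it equals $1$ the open set $O\setminus\cj{\simt(O)}$ is null and therefore empty, which is precisely the triviality inclusion. Your version is shorter, avoids the contraction principle, and yields the by-product that a non-trivial attractor with $\sum_j|\det A_j|<1$ is a null set, which dovetails with Corollary~\ref{cor:OSC-dimension-d-implies-trivial}. The one point to make explicit is that every step of your argument --- dividing by $\lambda_d(O)$, subtracting $\lambda_d(\simt(O))$ from $\lambda_d(O)$, and sending $\lambda_d(\simt^k(\cj{O}))\to 0$ --- requires $\lambda_d(O)<\infty$, i.e.\ essentially that the feasible open set is bounded. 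Definition~\ref{def:OSC} does not formally impose boundedness, but the paper's own proof makes the same tacit assumption (its Hausdorff-distance step needs $\cj{T_0}$ at finite distance from \attr, and its appeal to uniqueness of the invariant set needs $\cj{O}$ compact), so this is a shared convention rather than a defect of your approach; a sentence restricting attention to bounded feasible sets would make either proof airtight.
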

\begin{proof}
  $(i)\Rightarrow (iv)$: Let $O$ be an arbitrary feasible set for \attr; we already have one containment from Proposition~\ref{thm:F-in-clo(O)}. Assume \attr is trivial, which means $O \ci \simt(\cj{O})$. Taking the closure, we get $\ghull \ci \simt(\ghull)$. The OSC implies $\simt(O) \ci O$ which, by taking closures again, implies $\simt(\ghull) \ci \ghull$. Hence $\ghull=\simt(\ghull)$. By the uniqueness of the invariant set, we infer $\attr = \ghull$.

  $(iv)\Rightarrow (iii)$ and $(iii)\Rightarrow (ii)$ are trivial. For the latter note that a feasible set $O$ is nonempty.

  $(ii)\Rightarrow (i)$ (by contraposition):
  If \attr is non-trivial, then there is a feasible set $O$ such that $O\not \ci \simt(\ghull)$.  Hence the set $T_0:=O\setminus\simt(\ghull)$ is nonempty, but $T_0 \cap \attr=\es$, since $\attr \ci \simt(\ghull)$. Observe that OSC implies $\simt_i(O) \cap \simt_j(\attr)=\emptyset$ for $i\neq j$. Therefore, $\simt_j(T_0)\cap \attr \ci \simt_j(T_0)\cap \simt_j(\attr)=\simt_j(T_0\cap \attr)=\emptyset$ and so $\simt(T_0)\cap \attr=\emptyset$. By induction, we get $\simt^k(T_0) \cap \attr = \es$ for $k=0,1,2,\ldots$ 
  Now let $x \in \attr$. Since, by the contraction principle,  $d_H(\attr,\simt^k(\cj{T_0})) = d_H(\simt^k(\attr),\simt^k(\cj{T_0})) \to 0$ as $k\to \infty$,
  there exists a sequence $x_k\to x$ with $x_k \in \simt^k(\cj{T_0})=\cj{\simt^k(T_0)}$. For each $x_k$ there are points in $\simt^k(T_0)$ arbitrarily close to $x_k$. Hence $x$ is not an interior point of \attr.
\end{proof}

\begin{remark}
  Note that Proposition~\ref{thm:nontriv-equiv-conditions} provides an easy criterion to decide whether a self-affine set has interior points. Take an arbitrary feasible open set $O$ of \attr and check whether $O$ contains a point with positive distance to \attr. If not, then \attr has interior points, otherwise $ \inn\attr$ is empty. Conversely, if it is known for some \attr that it has nonempty interior, then the search for a feasible open set can be restricted to subsets of \attr.
\end{remark}

For completeness, we note that also Corollary~\ref{thm:dimension-d-implies-trivial} generalizes to this more general notion of non-triviality used here. The argument in the proof carries over, when taking Proposition~\ref{thm:nontriv-equiv-conditions} into account.
\begin{cor}\label{cor:OSC-dimension-d-implies-trivial}
  Let $\attr \ci \bRd$ be a self-affine set satisfying OSC. If \attr has Hausdorff dimension strictly less than $d$, then \attr is non-trivial.
 Moreover, if \attr is self-similar, then also the converse holds.
\end{cor}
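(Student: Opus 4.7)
The plan is to follow the template of the earlier Corollary~\ref{thm:dimension-d-implies-trivial} almost verbatim, with two substitutions: replace every appeal to Proposition~\ref{thm:nontriv-implies-empty-interior} (which was the TSC-version linking triviality to having interior points) by the more general Proposition~\ref{thm:nontriv-equiv-conditions}, and note that Schief's theorem applies under OSC directly (it does not require TSC), so it is available in the generalized setting.

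For the first assertion, I would argue by contraposition. Assume \attr is trivial. By Proposition~\ref{thm:nontriv-equiv-conditions} (specifically the implication (i)$\Rightarrow$(ii)), \attr has non-empty interior. Since \attr is embedded in $\R^d={\rm aff}\,\attr$ (cf.\ Remark~\ref{rem:convex-hull-and-affine-hull}), an interior point of \attr lies in an open ball of $\R^d$ contained in \attr, which forces $\dim_H \attr = d$. Contrapositively, $\dim_H \attr < d$ implies \attr is non-trivial.

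For the converse in the self-similar case, assume $\dim_H \attr = d$. Since \attr satisfies OSC, Schief's result \cite[Thm.~2.2 and Cor.~2.3]{Schi} applies and yields that \attr has non-empty interior, i.e.\ condition (ii) of Proposition~\ref{thm:nontriv-equiv-conditions} holds. Applying the equivalence (ii)$\Rightarrow$(i) from that proposition shows \attr is trivial, completing the proof.

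I do not anticipate any real obstacle: both implications are direct chains of applications of results already established in the paper, and the only substantive external input (Schief's theorem) is exactly the one already used in the TSC-analogue. The only point requiring minor care is to make sure that the equivalences in Proposition~\ref{thm:nontriv-equiv-conditions} are used in the right direction in each half of the argument, so that the proof is entirely self-contained within the OSC framework and does not covertly reuse the TSC-specific Proposition~\ref{thm:nontriv-implies-empty-interior}.
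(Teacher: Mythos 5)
Your proposal is correct and matches the paper's intended argument exactly: the paper itself states that the proof of Corollary~\ref{thm:dimension-d-implies-trivial} ``carries over, when taking Proposition~\ref{thm:nontriv-equiv-conditions} into account,'' which is precisely the substitution you make, together with the observation that Schief's theorem only needs OSC. Both directions are applied with the equivalences of Proposition~\ref{thm:nontriv-equiv-conditions} used in the correct orientation, so there is nothing to add.
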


Now we can state the main result of this section.

\begin{theorem}[Generalized Tiling]
  \label{thm:generaltiling}
  Let \attr be a self-affine set satisfying $ \inn\attr=\emptyset$ and OSC. Let $O$ be an arbitrary feasible open set for \attr and $\ghull=\cj{O}$. Let $G_1, G_2, \ldots$ denote the connected components of the open set $O \setminus \simt(\ghull)$. Then
  \[\tiling(O):=\{\simt_w(G_q) \suth w \in W, q \in Q\}\]
  is an open tiling of \ghull, i.e., the tiles $\simt_w(G_q)$ are pairwise disjoint and
  \[\ghull=\cj{\bigcup_{R \in\tiling(O)} R}.\]
\end{theorem}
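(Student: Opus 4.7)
The plan is to verify the two defining properties of $\tiling(O)$ -- pairwise disjointness of the tiles and the closure identity $\ghull=\cj{\bigcup_{R\in\tiling(O)}R}$ -- by adapting the strategy of \cite[Thm.~5.16]{SST}, with OSC replacing TSC and $\cj O$ replacing $\hull$.

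For pairwise disjointness of $\simt_w(G_q)$ and $\simt_{w'}(G_{q'})$, I would split according to the longest common prefix of $w$ and $w'$. If $w=w'$ and $q\neq q'$, disjointness is clear since the $G_q$ are distinct connected components of $O\setminus\simt(\ghull)$ and $\simt_w$ is injective. If $w$ and $w'$ branch at some first position ($w=uiv$, $w'=ujv'$, $i\neq j$), then the two tiles lie inside $\simt_{ui}(O)$ and $\simt_{uj}(O)$ respectively, which are disjoint by \eqref{eqn:osc3} after cancelling the injective map $\simt_u$. If $w'=wu$ with $|u|\ge1$, then $\simt_{w'}(G_{q'})\ci\simt_w(\simt(\ghull))$ while $\simt_w(G_q)\ci\simt_w(T_0)$ where $T_0:=O\setminus\simt(\ghull)$; cancelling $\simt_w$ reduces disjointness to the tautology $T_0\cap\simt(\ghull)=\es$.

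For the covering, set $U:=\bigcup_w\simt_w(T_0)=\bigcup_{w,q}\simt_w(G_q)$. The inclusion $\cj U\ci\ghull$ is immediate from $\simt_w(O)\ci O\ci\ghull$. For the nontrivial direction $\ghull\ci\cj U$, the starting point is the elementary set-theoretic identity
$$\ghull=T_0\cup\simt(\ghull)\cup\bd O,$$
obtained from $\ghull=O\cup\bd O$ together with $O=T_0\cup(O\cap\simt(\ghull))$. Given $x\in\ghull$, I build an address: set $y_0:=x$, and while $y_n\in\simt(\ghull)$, pick some $j_{n+1}$ with $y_n\in\simt_{j_{n+1}}(\ghull)$ and put $y_{n+1}:=\simt_{j_{n+1}}^{-1}(y_n)\in\ghull$, so that $x=\simt_{j_1\cdots j_n}(y_n)$ holds at every stage.

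Three outcomes are possible. If some $y_n\in T_0$, then $x\in\simt_{j_1\cdots j_n}(T_0)\ci U$. If some $y_n\in\bd O\setminus\simt(\ghull)$, then $y_n$ is a boundary point of $O$ lying in the open complement of the closed set $\simt(\ghull)$, so arbitrarily close to $y_n$ one finds points of $O\setminus\simt(\ghull)=T_0$; hence $y_n\in\cj{T_0}$, and applying the homeomorphism $\simt_{j_1\cdots j_n}$ yields $x\in\cj{\simt_{j_1\cdots j_n}(T_0)}\ci\cj U$. If the iteration never halts, then $x\in\simt^n(\ghull)$ for every $n$, so $x\in\attr=\bigcap_n\simt^n(\ghull)$; by the hypothesis $\inn\attr=\es$ and Proposition~\ref{thm:nontriv-equiv-conditions}, \attr is non-trivial, whence $T_0\neq\es$, and the Hausdorff-convergence argument from the proof of that proposition yields $d_H(\attr,\simt^k(\cj{T_0}))\to 0$, placing \attr inside $\cj U$. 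The main obstacle is the $\bd O\setminus\simt(\ghull)$ branch, which has no counterpart in the convex-hull setting of \cite{SST}; its resolution rests on the elementary observation above that such boundary points are automatically accumulation points of $T_0$.
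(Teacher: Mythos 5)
Your proof is correct, and it takes a genuinely different route from the paper's. The paper organizes the whole argument around the level sets $T_k := O^k\setminus\ghull^{k+1}$: the Propagation of Tilesets lemma (Lemma~\ref{lemma:propagation_of_tilesets}) gives $\simt(T_k)=T_{k+1}$ and hence $T_k=\bigcup_{w\in W^k,q}\simt_w(G_q)$, after which disjointness follows from the telescoping decomposition of the decreasing sequence $(\ghull^k)$ (Lemma~\ref{lem:disjoint}) combined with \eqref{eqn:osc3} within each level, and covering follows because every $x\in\ghull$ lies either in some $\ghull^k\setminus\ghull^{k+1}\ci\cj{T_k}$ or in $\attr=\bigcap_k\ghull^k$, the latter case being handled by approximation using $\inn\attr=\es$. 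You instead prove disjointness by a direct case analysis on the combinatorics of the two words (same word, branching words, prefix words), and covering by an itinerary argument whose only delicate branch, $y_n\in\bd O\setminus\simt(\ghull)$, you resolve by observing that such points are automatically accumulation points of $T_0$ (this is exactly the point the paper absorbs silently into the inclusion $\ghull^k\setminus\ghull^{k+1}\ci\cj{T_k}$, so making it explicit is a genuine clarification). The paper's route has the advantage that the identity $T_k=\bigcup_{w\in W^k,q}\simt_w(G_q)$ is a reusable structural fact (it is what Corollary~\ref{disjointness-of-F-and-R} rests on), whereas your argument is more self-contained. One small point to tighten: when you invoke non-triviality to conclude $T_0\neq\es$ for the \emph{given} $O$, note that Definition~\ref{def:nontriv} only asserts \eqref{eqn:nontriv} for \emph{some} feasible set; you need the fact (established via Proposition~\ref{thm:nontriv-equiv-conditions}, since $\cj O\neq\attr$ forces $O\not\ci\simt(\cj O)$) that it then holds for every feasible set -- the paper's own proof elides this in the same way.
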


To prepare the proof, we note the following fact.

\begin{lemma} \label{lem:disjoint}
  Let $A=A^0\supseteq A^1\supseteq A^2\supseteq\ldots$ be a decreasing sequence of sets, and define $B:=\bigcap_{k=0}^ \infty A^k$. Then we can decompose $A$ as the disjoint union
  \linenopar
  \begin{align*}
    A = B \cup \bigcup_{k=0}^ \infty \left(A^k\setminus A^{k+1}\right).
  \end{align*}
\end{lemma}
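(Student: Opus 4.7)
The plan is to verify two things: (1) the sets $B$ and $A^k \setminus A^{k+1}$ (for $k = 0, 1, 2, \ldots$) are pairwise disjoint, and (2) their union equals $A$. Both follow directly from the nestedness $A^0 \supseteq A^1 \supseteq \cdots$ and the definition $B = \bigcap_{k=0}^\infty A^k$, so this should be a short set-theoretic argument with no real obstacle.

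For disjointness, I would first note that if $x \in B$, then $x \in A^{k+1}$ for every $k$, so $x \notin A^k \setminus A^{k+1}$; this shows $B$ is disjoint from each set in the union. Next, for $j < k$, if $x \in A^j \setminus A^{j+1}$, then $x \notin A^{j+1}$, and by nestedness $A^k \subseteq A^{j+1}$, so $x \notin A^k$, hence $x \notin A^k \setminus A^{k+1}$. This handles pairwise disjointness of the difference sets.

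For the covering property, the inclusion $B \cup \bigcup_k (A^k \setminus A^{k+1}) \subseteq A$ is immediate since $B \subseteq A^0 = A$ and each $A^k \setminus A^{k+1} \subseteq A^k \subseteq A$. For the reverse inclusion, pick $x \in A$. If $x \in A^k$ for every $k$, then $x \in B$ and we are done. Otherwise, let $k_0$ be the smallest index with $x \notin A^{k_0+1}$ (such a $k_0$ exists and is $\geq 0$). By minimality of $k_0$ (or because $x \in A^0 = A$ when $k_0 = 0$) we have $x \in A^{k_0}$, hence $x \in A^{k_0} \setminus A^{k_0+1}$, which lies in the right-hand union. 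This completes the decomposition.

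The argument uses only the decreasing property of the sequence and the definition of $B$; no topological or measure-theoretic input is needed, and there is no genuinely hard step.
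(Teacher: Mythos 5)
Your proof is correct and complete: the disjointness argument via nestedness and the choice of the minimal index $k_0$ with $x \notin A^{k_0+1}$ are exactly the standard steps. The paper itself omits a proof entirely, stating the lemma as an evident set-theoretic fact, so your write-up supplies precisely the routine argument the authors had in mind.
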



Denote $T_0 := \bigcup_{q \in Q} G_q = \inn(O \setminus \simt(O))$, and more generally, set $T_k := O^k \setminus \ghull^{k+1} = \inn(\ghull^k \setminus \ghull^{k+1})$ for $k=1,2,\dots$. We now adapt \cite[Thm.~5.14, p.~3165]{SST} to the present more general setting.

\begin{lemma}[Propagation of Tilesets]
  \label{lemma:propagation_of_tilesets}
  $\simt(\tileset_k) = \tileset_{k+1}$, for each $k=0, 1, 2,\dots$.
\end{lemma}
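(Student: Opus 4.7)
The plan is to verify the two inclusions $\simt(T_k) \subseteq T_{k+1}$ and $T_{k+1} \subseteq \simt(T_k)$ working directly from the identity $T_k = O^k \setminus \ghull^{k+1}$. Using $O^{k+1} = \simt(O^k) = \bigcup_j \simt_j(O^k)$, $\ghull^{k+2} = \simt(\ghull^{k+1}) = \bigcup_i \simt_i(\ghull^{k+1})$, together with the injectivity of each $\simt_j$ on $\bRd$, the two sides rewrite as
\begin{align*}
  \simt(T_k) &= \bigcup_{j=1}^N \bigl(\simt_j(O^k)\setminus\simt_j(\ghull^{k+1})\bigr),\\
  T_{k+1} &= \Bigl(\bigcup_{j=1}^N \simt_j(O^k)\Bigr)\,\setminus\,\Bigl(\bigcup_{i=1}^N \simt_i(\ghull^{k+1})\Bigr).
\end{align*}

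The crucial preliminary step is to extract from OSC the separation $\simt_j(O)\cap\simt_i(\ghull)=\varnothing$ whenever $i\neq j$. This follows because $\simt_j(O)$ is open and $\simt_i(\ghull)=\cj{\simt_i(O)}$ (each $\simt_i$ is a homeomorphism of $\bRd$, so it commutes with closure): if the two sets met at a point $x$, an open neighbourhood of $x$ lying in $\simt_j(O)$ would contain points of $\simt_i(O)$, contradicting \eqref{eqn:def:OSC-disjoint}. Combining this with the nestedness $\simt_j(O^k)\subseteq \simt_j(O)$ and $\simt_i(\ghull^{k+1})\subseteq \simt_i(\ghull)$ (Proposition~\ref{thm:nestedness-of-Ks} applied to each branch), we promote this to $\simt_j(O^k)\cap \simt_i(\ghull^{k+1})=\varnothing$ for all $i\neq j$.

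Given this separation, the two displayed expressions coincide: for fixed $j$, the only term in $\bigcup_i \simt_i(\ghull^{k+1})$ that can meet $\simt_j(O^k)$ is the $j$-th one, so subtracting the whole union from $\simt_j(O^k)$ has the same effect as subtracting only $\simt_j(\ghull^{k+1})$. Taking the union over $j$ yields $\simt(T_k)=T_{k+1}$. No induction on $k$ is required; a single set-theoretic manipulation suffices for every $k$.

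The only nontrivial point in the argument is the separation $\simt_j(O)\cap\simt_i(\ghull)=\varnothing$, which is the replacement for the role played by TSC in the convex-hull setting of \cite[Thm.~5.14]{SST}. Everything after that is purely formal, and I do not expect the alternative description $T_k=\inn(\ghull^k\setminus\ghull^{k+1})$ to enter the proof — the form $T_k=O^k\setminus\ghull^{k+1}$ is the one that interacts cleanly with the injectivity and branchwise disjointness that the argument relies on.
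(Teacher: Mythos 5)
Your proof is correct and takes essentially the same approach as the paper's: in both, the crux is the separation $\simt_j(O^k)\cap\simt_\ell(\ghull^{k+1})=\emptyset$ for $j\neq\ell$, derived from OSC by playing the openness of one image against the fact that the other is the closure of its interior. The only difference is organizational --- you establish the separation once at level one and lift it to level $k$ by nestedness, then conclude with a single set identity, whereas the paper chases elements and applies the same density argument at level $k$ directly via the iterated form \eqref{eqn:osc3} of OSC.
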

\begin{proof}
    ($\ci$) Let $x \in \simt(\tileset_k)$. Choose $j$ so that $x \in \simt_j(\tileset_k) = \simt_j(O^k) \less \simt_j(\ghull^{k+1})$.
    Then $x \in \simt_j(O^k) \ci O^{k+1}$.
    To see $x \notin \ghull^{k+2}$, suppose it is. Then $x \in \simt_\ell(\ghull^{k+1})$ for some $\ell$.
    Note that $\ell \neq j$, since, by the choice of $j$, $x \notin \simt_j(\ghull^{k+1})$.
    Now $\ghull^{k+1} \ci \ghull^k$ by Proposition~\ref{thm:F-in-clo(O)}, which implies $x \in \simt_\ell(\ghull^{k+1}) \ci \simt_\ell(\ghull^k)$ and hence $x \in \simt_\ell(\ghull^k)\cap \simt_j(O^k)$. Since $\simt_j(O^k)$ is open and $\simt_\ell(\ghull^k)$ the closure of its interior, there must be points of $\simt_j(O^k)$ in the interior of $\simt_\ell(\ghull^k)$, i.e., in $\simt_\ell(O^k)$, contradicting OSC.

    ($\ce$) Pick $x \in \tileset_{k+1} = O^{k+1} \less \ghull^{k+2}$.
    Then $x \in O^{k+1} = \simt(O^{k})$ and so $x \in \simt_j(O^k)$ for some $j$.
    Hence $x=\simt_j(y)$ for $y \in O^k$.
    If $y \in \ghull^{k+1}$, then $x=\simt_j(y) \in \ghull^{k+2}$, a contradiction to $x \in \tileset_{k+1}$.
    So $y \notin \ghull^{k+1}$, and hence $y \in O^k \less \ghull^{k+1}$.
    We conclude $x = \simt_j(y) \in \simt_j(O^k \less \ghull^{k+1}) \ci \simt(O^k \less \ghull^{k+1})$, which completes the proof.
\end{proof}

\begin{proof}[Proof of Theorem~\ref{thm:generaltiling}] Since \attr has no interior points, by Proposition~\ref{thm:nontriv-equiv-conditions}, \attr is non-trivial, i.e., the set $T_0$ is nonempty.
  Since $T_0=\bigcup_{q \in Q} G_q$, Lemma~\ref{lemma:propagation_of_tilesets} immediately implies
  \begin{equation}\label{eqn:T_k}
  T_k = \bigcup_{w \in W^k,q \in Q} \simt_w(G_q).
  \end{equation}
  In the following, $T:=\bigcup_{R \in \tiling(O)} R$ denotes the union of all the tiles in $\tiling(O)$. Since $\simt_w(G_q) \ci O\subset \ghull$, we have $T \ci \ghull$, and since \ghull is closed the inclusion $\cj{T} \ci \ghull$ is obvious.
  It remains to show the reversed inclusion. Let $x \in \ghull$. By Lemma~\ref{lem:disjoint}, either $x \in \bigcap_k \ghull^k= \attr$ or there is some $k \in \N_0$ such that $x \in \ghull^k\setminus \ghull^{k+1} \ci \cj{{T}_k}$.
  If $x \in \cj{{T}_k}$ then equation \eqref{eqn:T_k} implies $x \in\cj{\bigcup \simt_w(G_q)}$, where the union is taken over all $w \in W^k$ and $q \in Q$, and therefore $x \in \cj{T}$. If $x \in \attr$, then there exists a sequence $(x_i) \in \ghull\setminus \attr$ converging to $x$ (since \attr has no interior points). The previous argument shows $x_i \in\cj{T}$, and hence the same holds for $x=\lim x_i$. This shows $\ghull \ci \cj{T}$ and hence $\ghull=\cj{T}$.

  By Lemma~\ref{lem:disjoint}, the sets $\ghull^k\setminus \ghull^{k+1}$ are pairwise disjoint, and hence so are the sets $T_k$.
  Moreover, the union in \eqref{eqn:T_k} is disjoint, which follows immediately from \eqref{eqn:osc3} and the fact that the sets $G_q$ are pairwise disjoint and subsets of $O$. Hence the sets $\simt_w(G_q) \in\tiling(O)$ are pairwise disjoint.
\end{proof}

As a corollary to the proof we note the following for later use
\begin{cor} \label{disjointness-of-F-and-R}
  $R\cap \attr=\emptyset$ for each $R \in\tiling(O)$.
  \begin{proof}
    Recall that $\attr \ci \ghull^k$ for each $k$. Hence $T_k=O^k\setminus \ghull^{k+1}$ has empty intersection with \attr. By \eqref{eqn:T_k}, each tile $R \in\tiling(O)$ is contained in one of the sets $T_k$.
  \end{proof}
\end{cor}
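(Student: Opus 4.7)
The plan is to chase through the definitions: every tile in $\tiling(O)$ lies in some level set $T_k$, and each $T_k$ is contained in $\ghull^k\setminus \ghull^{k+1}$, which is disjoint from $\attr$ by the self-invariance of $\attr$ together with nestedness.

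More precisely, first I would recall from Theorem~\ref{thm:generaltiling} that any $R\in\tiling(O)$ has the form $R=\simt_w(G_q)$ for some $w\in W^k$ (where $k=|w|$) and some $q\in Q$. Lemma~\ref{lemma:propagation_of_tilesets} (together with \eqref{eqn:T_k}, which expresses $T_k$ as $\bigcup_{w\in W^k,\,q\in Q}\simt_w(G_q)$) then yields the inclusion $R\subseteq T_k$. By definition $T_k = O^k\setminus \ghull^{k+1}\subseteq \ghull\setminus\ghull^{k+1}$, so it suffices to show that $\attr$ and $\ghull^{k+1}$ are related by $\attr\subseteq \ghull^{k+1}$.

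For the latter inclusion I would simply iterate the invariance $\simt(\attr)=\attr$: applying this $k+1$ times gives $\attr = \simt^{k+1}(\attr)\subseteq \simt^{k+1}(\ghull) = \ghull^{k+1}$, where the inclusion uses $\attr\subseteq \ghull$ from Proposition~\ref{thm:F-in-clo(O)}. Combining, $R\cap\attr\subseteq T_k\cap \attr\subseteq (\ghull\setminus\ghull^{k+1})\cap\ghull^{k+1}=\emptyset$, which is the desired conclusion.

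There is no real obstacle here — the statement is essentially bookkeeping once Lemma~\ref{lemma:propagation_of_tilesets} and the nestedness result (Proposition~\ref{thm:nestedness-of-Ks}) are in hand. The only mild subtlety is to remember that one must use $\ghull^{k+1}$ (rather than $\ghull^{k}$) as the containing set for $\attr$, so that the difference $O^k\setminus \ghull^{k+1}$ actually misses $\attr$; but this matches precisely how $T_k$ was defined.
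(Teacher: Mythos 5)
Your proposal is correct and follows essentially the same route as the paper: both arguments use \eqref{eqn:T_k} to place each tile inside some $T_k = O^k\setminus\ghull^{k+1}$ and then observe that $\attr\subseteq\ghull^{k+1}$ (which you correctly derive from $\simt(\attr)=\attr$, $\attr\subseteq\ghull$, and nestedness), so each $T_k$ is disjoint from $\attr$. The only difference is that you spell out the inclusion $\attr\subseteq\ghull^{k+1}$ which the paper simply recalls.
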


\begin{remark}[Different open sets may yield the same tiling] \label{rem:different-O-same-tiling}
  Note that two feasible open sets $O$ and $O'$ do not necessarily produce different tilings. If $\cj O=\cj{O'}$, then the tilings $\tiling(O)$ and $\tiling(O')$ coincide. In both cases one obtains an open tiling of the set $\ghull=\cj O=\cj O'$.  Therefore, for most questions it suffices to restrict considerations to feasible sets $O$ satisfying  $\inn \cj{O}=O$.
\end{remark}

\begin{remark}[The dimension of the boundary of the tiling in the general case]
  \label{rem:general-tiling-boundary-dimension}
  Proposition~\ref{prop:dim-bd-T} states that $\dim_H \bd T=\max\{\dim_H \attr, d-1\}$ for the $\tiling(\hull)$ of the convex hull, but this does not extend to the generalized tilings $\tiling(O)$. In general, $\bd O$ will not be of dimension $d-1$ and the tiles of $\tiling(O)$ may still have a fractal boundary. So the statement is slightly different. For the boundary $\bd T$ of a tiling $\tiling(O)$ one has
  \[\dim_H \bd T = \max\{\dim_H \attr, \dim_H \bd T_0\}, \text{ where } \dim_H \bd T_0 \ge d-1.\]
\end{remark}

 \section{Generalizing the compatibility theorem}
\label{sec:Generalizing-the-compatibility-theorem}
In the previous section we constructed a tiling for each feasible open set of a self-affine set \attr, provided \attr is nontrivial in the sense of Definition~\ref{def:nontriv}. For each feasible open set $O$ of \attr, we denote the corresponding tiling by $\tiling(O)$. The motivation was to find a tiling which can be used to decompose the parallel sets of \attr. The theme of this section is the search for feasible open sets that are suitable for this purpose. 
We revisit the Compatibility Theorem of \S\ref{sec:compatibility} and find conditions on a feasible open set that allow for an analogue of Theorem~\ref{thm:parallel-set-compatibility}.

We start by discussing an appropriate generalization of Proposition~\ref{parsetprop}. Throughout we use the notation of the previous section. In particular, for  a self-affine set \attr and a feasible open set $O$, $\tiling(O)$ is the associated self-affine tiling, $G_q$ are the generators, $T=\bigcup_{R \in \tiling(O)} R$ is the union of the tiles and $T_{-\eps}$ the inner parallel set of $T$.
\begin{prop} \label{generalized-parsetprop}
Let \attr be the self-affine set associated to the system $\simtset$. Assume that \attr has empty interior and satisfies the OSC with a feasible set $O$. Let $\tiling(O)$ be the associated tiling and $\ghull=\cj{O}$.
   Then
  \begin{enumerate}
    \item $\attr\ci \bd T$.
    \item $\attr_\eps \cap \ghull\ci T_{-\eps}$ for $\eps\ge 0$.
    \item $\attr_\eps \cap \ghull^\complement\ci \ghull_\eps$ for $\eps \ge 0$.
  \end{enumerate}
\end{prop}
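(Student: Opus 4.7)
The plan is to mimic the proof of Proposition~\ref{parsetprop} essentially verbatim, replacing \hull by \ghull throughout and invoking the generalized tiling machinery of \S\ref{sec:Generalization-of-the-tiling-construction} in place of the convex-hull-specific results used there. The three ingredients needed are: Theorem~\ref{thm:generaltiling}, which gives $\ghull = \cj T$ and makes clear that $T$ is open (being a union of open tiles); Corollary~\ref{disjointness-of-F-and-R}, which yields $R\cap \attr=\emptyset$ for every tile $R\in \tiling(O)$ and hence $T\cap \attr=\emptyset$; and Proposition~\ref{thm:F-in-clo(O)}, which gives $\attr\ci \ghull$.

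For (i), since $T$ is open, $\bd T=\cj T\less T$. Corollary~\ref{disjointness-of-F-and-R} yields $\attr\ci T^\complement$, while Proposition~\ref{thm:F-in-clo(O)} combined with Theorem~\ref{thm:generaltiling} gives $\attr\ci \ghull=\cj T$. Intersecting the two produces $\attr \ci \cj T\less T=\bd T$.

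For (ii), fix $\eps\ge 0$ and take $x\in \attr_\eps\cap \ghull$. Because $\ghull=\cj T$ and $T$ is open, either $x\in \bd T$ or $x\in T$. In the first case, $x\in T_{-\eps}$ follows from the inclusion $\bd T\ci T_{-\eps}$ noted after Definition~\ref{def:eps-parallel-set}. In the second case, the assumption $x\in \attr_\eps$ supplies a $y\in \attr$ with $d(x,y)\le \eps$; by (i), $y\in \bd T\ci T^\complement$, so $d(x,T^\complement)\le \eps$ and again $x\in T_{-\eps}$. Part (iii) is immediate from $\attr\ci \ghull$: any $x\in \attr_\eps\cap \ghull^\complement$ has a point $y\in \attr\ci \ghull$ within distance $\eps$, so $d(x,\ghull)\le \eps$ and thus $x\in \ghull_\eps$.

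I do not expect a serious obstacle. The construction in \S\ref{sec:Generalization-of-the-tiling-construction} was set up precisely so that the relations $\attr\cap T=\emptyset$, $\ghull=\cj T$, and $\attr\ci \ghull$ persist in the general setting; once these are in hand, the three inclusions of the proposition reduce to exactly the same short arguments used in the convex-hull case, with only the mild additional observation that $T$ is open (needed in order to write $\bd T=\cj T\less T$, but routine here since the tiles themselves are open).
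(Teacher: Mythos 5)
Your proposal is correct and follows essentially the same route as the paper: part (i) from $\attr\ci\ghull=\cj T$ (via Proposition~\ref{thm:F-in-clo(O)} and Theorem~\ref{thm:generaltiling}) together with $\attr\cap T=\emptyset$ (Corollary~\ref{disjointness-of-F-and-R}), part (ii) by the same two-case split, and part (iii) directly from $\attr\ci\ghull$. No gaps.
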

\begin{proof}
  (i) On the one hand, $\tiling(O)$ is an open tiling of \ghull and thus $\attr \ci \ghull= \overline{T}$. On the other hand, by Corollary~\ref{disjointness-of-F-and-R}, $R \cap \attr=\emptyset$ for all $R \in \tiling(O)$, i.e., $\attr\cap T=\emptyset$. Thus $\attr\ci \overline{T}\setminus T=\bd T$.

  (ii) Fix $\eps \ge 0$. Let $x \in \attr_\eps \cap \ghull$. Then, since $x \in \ghull=\overline{T}$, either $x \in \bd T$ or $x \in T$. In the former case $x \in T_{-\eps}$ is obvious, since $\bd T \ci T_{-\eps}$. In the latter case there exists a point $y \in \attr$ with $d(x,y)\le\eps$. By (i), $y$ is in $\bd T$ and so $d(x,\bd T) \le \eps$, whence $x \in T_{-\eps}$.

  (iii) is an immediate consequence of the inclusion $\attr\ci \ghull$.
\end{proof}

\begin{theorem}[Generalized Compatibility Theorem]
  \label{thm:Generalized-parallel-set-compatibility}
  Let \attr be the self-affine set associated to the system $\simtset$. Assume that \attr has empty interior and satisfies the OSC with a feasible set $O$. Let $\tiling(O)$ be the associated tiling of $O$. Then the following assertions are equivalent:
  \begin{enumerate}
    \item \label{itm:g-parsetcomp-bdT-is-F} $\bd T = \attr$.
    \item \label{itm:g-parsetcomp-bdC-in-F} $\bd \ghull \ci \attr$.
    \item \label{itm:g-parsetcomp-bdPhiCcomp-in-F} $\bd (\ghull\setminus \simt(\ghull)) \ci \attr$.
    \item \label{itm:g-parsetcomp-bdG-in-F} $\bd G_q \ci \attr$ for all $q \in Q$.
    \item \label{itm:g-parsetcomp-FcapC-is-innernbd} $\attr_\eps \cap \ghull=T_{-\eps}$ for all $\eps\ge 0$.
    \item \label{itm:g-parsetcomp-FcapCc-is-outernbd} $\attr_\eps \cap \ghull^\complement= \ghull_\eps\cap \ghull^\complement$ for all 
    $\eps\ge 0$.
  \end{enumerate}
  \begin{proof}
   Observe that in the proof of Theorem~\ref{thm:parallel-set-compatibility} the convexity of the set \hull is not used (just the inclusion $\attr \ci \hull$, which is also satisfied here by Proposition~\ref{thm:F-in-clo(O)}: $\attr \ci \cj{O} = \ghull$). Thus the proof can be carried over to the new situation by replacing \hull with \ghull and applying Proposition~\ref{generalized-parsetprop} instead of  Proposition~\ref{parsetprop} where necessary.
  \end{proof}
  \end{theorem}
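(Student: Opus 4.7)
The plan is to transfer the proof of Theorem~\ref{thm:parallel-set-compatibility} verbatim, replacing \hull with \ghull throughout and invoking Proposition~\ref{generalized-parsetprop} wherever Proposition~\ref{parsetprop} was used. Before doing so, I would audit the original argument to verify that convexity of \hull is genuinely never exploited; only two abstract features of \hull appear in the proof, namely (a) the containment $\attr \subseteq \hull$, and (b) that $\hull$ is the closure of its interior (used, for instance, to find interior points near a boundary point). Both transfer: (a) holds by Proposition~\ref{thm:F-in-clo(O)}, since $\attr \subseteq \cj O = \ghull$, and (b) holds tautologically because $\ghull = \cj O$ with $O$ open, giving $\ghull = \cj{\inn \ghull}$ (after the harmless normalization $O = \inn \cj O$ available by Remark~\ref{rem:different-O-same-tiling}).

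I would then walk through the same cycle of implications
\[
\eqref{itm:g-parsetcomp-bdT-is-F}\Rightarrow\eqref{itm:g-parsetcomp-bdC-in-F}\Rightarrow\eqref{itm:g-parsetcomp-bdPhiCcomp-in-F}\Rightarrow\eqref{itm:g-parsetcomp-bdG-in-F}\Rightarrow\eqref{itm:g-parsetcomp-bdT-is-F},\quad \eqref{itm:g-parsetcomp-bdT-is-F}\Rightarrow\eqref{itm:g-parsetcomp-FcapC-is-innernbd}\Rightarrow\eqref{itm:g-parsetcomp-bdG-in-F},\quad \eqref{itm:g-parsetcomp-bdC-in-F}\Leftrightarrow\eqref{itm:g-parsetcomp-FcapCc-is-outernbd}.
\]
For \eqref{itm:g-parsetcomp-bdT-is-F}$\Rightarrow$\eqref{itm:g-parsetcomp-bdC-in-F} I would verify that $\bd \ghull \subseteq \bd T$: since $T \subseteq O \subseteq \inn \ghull$ we have $\bd \ghull \cap T = \emptyset$, and since $\cj T = \ghull$ by Theorem~\ref{thm:generaltiling}, $\bd \ghull \subseteq \ghull \less T = \bd T$. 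The implications \eqref{itm:g-parsetcomp-bdC-in-F}$\Rightarrow$\eqref{itm:g-parsetcomp-bdPhiCcomp-in-F} and the self-similar invariance of $\attr$ go through with no change, because they rely only on the boundary operator calculus $\bd(A\cup B)\subseteq \bd A\cup\bd B$ and $\bd(A\setminus B)\subseteq \bd A\cup\bd B$ together with $\simt(\attr)=\attr$. The implication \eqref{itm:g-parsetcomp-bdG-in-F}$\Rightarrow$\eqref{itm:g-parsetcomp-bdT-is-F} is unchanged once one recalls (Corollary~\ref{disjointness-of-F-and-R}) that every tile has the form $\simt_w G_q$, and then applies Lemma~\ref{thm:opentilinglem1}. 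The two implications involving $\eqref{itm:g-parsetcomp-FcapC-is-innernbd}$ and the equivalence with $\eqref{itm:g-parsetcomp-FcapCc-is-outernbd}$ carry over by the same metric/contrapositive arguments, with Proposition~\ref{generalized-parsetprop} supplying the ``always true'' inclusions.

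The main technical care point, and the only step where I expect to pause, is \eqref{itm:g-parsetcomp-bdPhiCcomp-in-F}$\Rightarrow$\eqref{itm:g-parsetcomp-bdG-in-F}. In the original setting the generators tile $\inn(\hull \setminus \simt(\hull))$, and one uses $\bd(\inn A)\subseteq \bd A$ with $A = \hull\setminus \simt(\hull)$. In the generalized setting the generators tile $T_0 = O\setminus \simt(\ghull)$, so one must check that $\bd T_0 \subseteq \bd(\ghull \setminus \simt(\ghull))$. Under the normalization $O = \inn \cj O$ this is immediate, because then $T_0 = \inn \ghull \setminus \simt(\ghull) = \inn(\ghull \setminus \simt(\ghull))$ and the same one-line argument applies; without that normalization, Remark~\ref{rem:different-O-same-tiling} lets us replace $O$ by $\inn \cj O$ without altering $\tiling(O)$ or $\ghull$, so no generality is lost. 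With this single bookkeeping point handled, the remainder of the proof is a mechanical translation of the original, and I would present it that way rather than rewriting each inclusion explicitly.
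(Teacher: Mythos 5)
Your proposal is correct and takes essentially the same route as the paper, which likewise disposes of this theorem by observing that the proof of Theorem~\ref{thm:parallel-set-compatibility} uses only the inclusion $\attr \ci \hull$ (never convexity) and hence carries over verbatim with $\hull$ replaced by $\ghull$ and Proposition~\ref{generalized-parsetprop} substituted for Proposition~\ref{parsetprop}. Your additional verification that $\bd T_0 \ci \bd(\ghull\setminus \simt(\ghull))$ under the normalization $O = \inn \cj{O}$ is a detail the paper leaves implicit, but it does not constitute a different approach.
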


Theorem~\ref{thm:Generalized-parallel-set-compatibility} does not indicate whether or not one can always find a set $O$ such that the associated tiling can be used to decompose $\attr_\eps$. That is, one might ask if for any self-affine set \attr (satisfying OSC and $ \inn \attr=\emptyset$) there is always a feasible set $O$ such that the equivalent conditions (i)--(vi) are satisfied.
Unfortunately, this is not the case in general. There are sets for which no such $O$ exists, for instance the \emph{Koch curve}. In fact, for fractals with connected complement, Proposition~\ref{prop:no-compatible-open-set} shows that the Compatibility Theorem is never satisfied. The class of sets characterized by the connectedness of $\attr^\complement$ includes all simple fractal curves (curves with no self-intersection) like the \emph{Koch curve}, all tree-like sets (dendrites) and all totally disconnected sets in $\R^d$ with $d \ge 2$. For $d \geq 3$, it even includes topologically nontrivial sets like the \emph{Menger sponge}.

\begin{prop}\label{prop:no-compatible-open-set}
  Let \attr be a self-affine set satisfying OSC and $\inn \attr=\emptyset$.
  If the complement of \attr is connected, then there is no feasible open set $O$ such that $\bd \ghull=\attr$.
  \begin{proof}
    It suffices to consider feasible open sets $O$ satisfying $ \inn \cj O=O$ (cf.~Remark~\ref{rem:different-O-same-tiling}), which implies $\bd \ghull=\bd O$.
    Since \attr has no interior points, we have $O\cap \attr^\complement\neq \emptyset$. Let $x \in O \cap \attr^\complement$. Since $O$ is not the whole set $\attr^\complement$, there is also a point $y \in \attr^\complement\setminus O$. Since $\attr^\complement$ is connected by assumption, it is also path connected. Hence there is a path from $x$ to $y$ in $\attr^\complement$ and it must cross the boundary $\bd O=\bd \ghull$ somewhere. Hence $\bd \ghull$ is not completely contained in \attr.
  \end{proof}
\end{prop}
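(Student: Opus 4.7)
My plan is to argue by contradiction. Suppose there is a feasible open set $O$ with $\bd \ghull = \attr$, where $\ghull := \cj O$. By Remark~\ref{rem:different-O-same-tiling} I may assume without loss of generality that $O = \inn \ghull$, since replacing $O$ by $\inn \cj O$ preserves both the closure $\ghull$ and the tiling, and therefore preserves the hypothesis $\bd \ghull = \attr$. Under this normalization one has the clean identity $\bd O = \cj O \setminus O = \ghull \setminus \inn \ghull = \bd \ghull = \attr$.

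The strategy is then to exhibit two points $x,y \in \attr^\complement$ with $x \in O$ and $y \notin O$, and to join them by a path $\gamma$ inside $\attr^\complement$. Because $\attr$ is closed, $\attr^\complement$ is open; together with the hypothesized connectedness this yields path-connectedness, so such $\gamma$ exists. Since $\gamma(0)=x\in O$ and $\gamma(1)=y\notin O$, continuity forces $\gamma$ to meet $\bd O = \attr$, contradicting $\gamma\subset\attr^\complement$.

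Producing $x$ is immediate: $O$ is nonempty and open while $\inn \attr=\es$, so $O\setminus\attr\neq\es$ and any point in this difference serves. Producing $y$ requires slightly more care. Since $\attr\neq\es$, the assumption $\bd\ghull=\attr$ gives $\bd\ghull\neq\es$; but in $\R^d$ the only closed sets with empty boundary are $\es$ and $\R^d$, so $\ghull\neq\R^d$ and thus $\ghull^\complement\neq\es$. Pick any $y\in\ghull^\complement$: then $y\notin O$ (since $O\subset\ghull$) and $y\in\attr^\complement$ (since $\attr\subset\ghull$ by Proposition~\ref{thm:F-in-clo(O)}), as required.

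The one subtlety I anticipate is the opening normalization: one must confirm that $\inn\cj O$ is itself a feasible open set for \attr and that replacing $O$ with it does not alter $\bd\ghull$; both are granted by Remark~\ref{rem:different-O-same-tiling}, which states that the construction depends only on $\ghull=\cj O$. Apart from that, the entire argument is an elementary separation-by-boundary walk in point-set topology, with the path hypothesis doing all of the geometric work.
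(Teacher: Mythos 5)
Your proof is correct and follows essentially the same route as the paper's: normalize $O=\inn\cj O$ via Remark~\ref{rem:different-O-same-tiling}, pick $x\in O\cap\attr^\complement$ (using $\inn\attr=\es$) and $y\in\attr^\complement\setminus O$, and observe that a path joining them in the open connected set $\attr^\complement$ must cross $\bd O=\bd\ghull$. Your justification for the existence of $y$ (via $\bd\ghull=\attr\neq\es$ forcing $\ghull\neq\R^d$) is in fact slightly more careful than the paper's terse assertion that $O$ is not all of $\attr^\complement$.
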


From the proof of Proposition~\ref{prop:no-compatible-open-set}, it is clear that any feasible set $O$ satisfying compatibility must be a subset of the envelope \env of \attr, the complement of the unbounded component of $\attr^\complement$ (cf.~Def.~\ref{def:envelope}). For many self-affine sets \attr the (interior of the) envelope itself is compatible; note that the envelope \env always satisfies the compatibility condition $\bd E\subset \attr$, by definition. If \env is feasible, then there exists a tiling that can be used to describe the parallel sets of \attr.
\begin{cor}\label{thm:envelope-extension}
Let \attr be a self-affine set with $ \inn \attr=\emptyset$ and satisfying OSC.
  If $ \inn \env$ is a feasible open set for \attr, then the self-affine tiling $\tiling=\tiling( \inn \env)$ allows a decomposition of $\attr_\eps$,
  \begin{equation}\label{eqn:envelope-extension}
    \attr_\eps = T_{-\eps} \cup \env_\eps,
  \end{equation}
  which is disjoint but for the null set $\bd \env$.
\end{cor}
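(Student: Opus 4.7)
The plan is to invoke the Generalized Compatibility Theorem (Theorem~\ref{thm:Generalized-parallel-set-compatibility}) with the feasible open set $O = \inn \env$. First I would verify that the corresponding $\ghull = \cj{\inn \env}$ coincides with $\env$: by Proposition~\ref{thm:F-in-clo(O)} applied to $O$, $\attr \ci \cj{\inn \env}$, and by Lemma~\ref{thm:envelope-wraps-attr} (indeed by the very definition of the envelope), $\bd \env \ci \attr$. Since $\env = \inn \env \cup \bd \env$, this gives $\env \ci \cj{\inn \env}$; the reverse inclusion is immediate from $\inn \env \ci \env$ and the fact that $\env$ is closed. Consequently condition \eqref{itm:g-parsetcomp-bdC-in-F} of Theorem~\ref{thm:Generalized-parallel-set-compatibility}, namely $\bd \ghull = \bd \env \ci \attr$, holds automatically, and therefore all six equivalent conditions of the theorem are satisfied.

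Next I would combine conditions \eqref{itm:g-parsetcomp-FcapC-is-innernbd} and \eqref{itm:g-parsetcomp-FcapCc-is-outernbd} to obtain
\[
  \attr_\eps = (\attr_\eps \cap \env) \cup (\attr_\eps \cap \env^\complement) = T_{-\eps} \cup (\env_\eps \cap \env^\complement).
\]
To identify this with $T_{-\eps} \cup \env_\eps$, I would use Definition~\ref{def:eps-parallel-set}, which gives $\env_\eps \ci \cj{\env^\complement}$. Since $\cj{\env^\complement} \cap \env = \bd \env$ and $\bd \env \ci \env_\eps$ (because $A_0 = \bd A \ci A_\eps$ for every $\eps \ge 0$), we get $\env_\eps \cap \env = \bd \env$, hence $\env_\eps = (\env_\eps \cap \env^\complement) \cup \bd \env$. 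Now $\bd \env \ci \attr$; since $\attr$ has empty interior it satisfies $\attr \ci \attr_\eps$, and together with $\bd \env \ci \env$ this yields $\bd \env \ci \attr_\eps \cap \env = T_{-\eps}$. Absorbing $\bd \env$ into $T_{-\eps}$ therefore gives $T_{-\eps} \cup \env_\eps = \attr_\eps$, which is the desired decomposition \eqref{eqn:envelope-extension}.

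For the almost-disjointness claim, $T_{-\eps} \ci \env$ by \eqref{itm:g-parsetcomp-FcapC-is-innernbd}, so $T_{-\eps} \cap \env_\eps \ci \env \cap \env_\eps = \bd \env$; the reverse inclusion $\bd \env \ci T_{-\eps} \cap \env_\eps$ was established above. Hence $T_{-\eps} \cap \env_\eps = \bd \env$, and this set is Lebesgue-null because $\bd \env \ci \attr$ (and in the standing self-similar setting, $\attr$ itself has Lebesgue measure zero by Corollary~\ref{cor:OSC-dimension-d-implies-trivial}, since $\inn \attr = \es$ forces $\dim_H \attr < d$). The main obstacle, insofar as there is one, is the bookkeeping step $\env = \cj{\inn \env}$; once that identification is in place the corollary is a clean reading of Theorem~\ref{thm:Generalized-parallel-set-compatibility} through the definition of the outer \eps-parallel set.
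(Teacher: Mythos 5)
Your proposal is correct and follows the same route the paper intends: the corollary is stated without a separate proof precisely because, as the preceding discussion notes, $\bd\env\ci\attr$ holds by definition of the envelope, so once $\inn\env$ is feasible the Generalized Compatibility Theorem applies with $O=\inn\env$, and conditions (v)--(vi) assemble into \eqref{eqn:envelope-extension}. Your bookkeeping steps ($\cj{\inn\env}=\env$, the absorption of $\bd\env$ into $T_{-\eps}$, and the identification $T_{-\eps}\cap\env_\eps=\bd\env$) are all sound and in fact supply details the paper leaves implicit, including the appropriately hedged justification that $\bd\env$ is Lebesgue-null.
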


However, one should not be overoptimistic; the set $\attr\subset\R$ in Example~\ref{ex1} shows that $\inn E$ is not always a feasible open set (in this case, the envelope $E$ coincides with the convex hull). We also provide the following example.

\begin{exm} \label{ex:envelope_counterexample}
  Let \attr be the attractor of the system $\{\simt_1,\ldots,\simt_4\}$ of four similarities, where $\simt_1, \simt_2, \simt_3$ are the usual mappings used for the Sierpinski gasket and $\simt_4$ scales the initial triangle by a factor $\frac 14$, rotates it by $\pi$ and translates it by $(\frac 14,\frac{\sqrt{3}}{8})$ such that it fits in the largest hole in $\simt_1 \attr$ (cf.~Figure~\ref{fig:envelope_counterexample}). This set satisfies OSC, for instance, the set $O:=\bigcup_{j=1}^4 \simt_j \hull$ is feasible. The envelope of \attr coincides with the convex hull of \attr, $E=\hull$, but $E$ is not feasible, since $\simt_1 E\cap \simt_4 E$ is not empty.
\end{exm}

\begin{figure}
  \centering
  \scalebox{1.0}{ \includegraphics{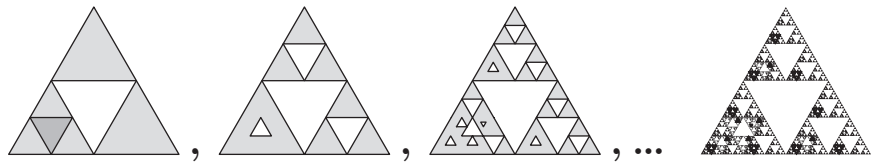}}
  \caption{\captionsize A self-similar set which satisfies OSC but for which the envelope is not feasible; see
  Example~\ref{ex:envelope_counterexample}.}
  \label{fig:envelope_counterexample}
\end{figure}

We believe that, if $\attr^\complement$ is not connected, i.e., if the envelope $E$ of \attr has nonempty interior, then there  exists always a subset $O$ of $E$ which is both compatible and feasible. So far we have not been able to prove this.


\section{Concluding comments and remarks}
\label{sec:end}

\begin{remark}
  Corollary~\ref{cor:OSC-dimension-d-implies-trivial} says that the trivial self-similar sets in $\R^d$ are precisely those which have full Hausdorff dimension. Hence all self-similar sets for which self-similar tilings can be constructed have Hausdorff dimension (and thus Minkowski dimension) strictly less than $d$. In \cite{Pointwise}, tube formulas are obtained for a class of \emph{fractal sprays} in $\R^d$, provided these sprays satisfy the same condition on the Minkowski dimension of their boundary. So Corollary~\ref{cor:OSC-dimension-d-implies-trivial} ensures that the latter condition does not impose any restrictions on the applicability of the tube formula results to the self-similar case.

  In the self-affine case, however,
  it remains open whether there exists a non-trivial $\attr\subset\R^d$ (i.e., one with empty interior) satisfying OSC which has full Hausdorff dimension.
  On the other hand, tube formulas are not available yet in this more general setting. The results obtained for fractal sprays do not apply in this case.
\end{remark}

\begin{remark}[Relation to tilings of \bRd]
  There is another notion of self-similar (self-affine) tilings which as been studied at length, namely tilings of the plane or, more generally, of \bRd. In this approach copies of self-similar (or self-affine) sets $\attr \ci \bRd$ are used as tiles to tile the whole of \bRd, which is very different to our approach, where a feasible set of \attr is tiled and where the tiles are not copies of \attr but subsets of  $\attr^\complement$. See \cite{GAST1,LaWa,NgaiTang05,Bandt97}, for example.

  However, there are interesting relations between both concepts. Firstly, the open set condition  is a natural requirement in both approaches. Secondly, the concepts are in a way complementary to each other. Tilings of \bRd require a self-similar sets \attr to have full dimension, while the tilings of feasible sets require \attr to have dimension strictly less than $d$.

  \begin{prop}\label{thm:tiling-dichotomy}
    Let \attr be a self-similar set satisfying OSC. Then there is a dichotomy:
    \begin{enumerate}
      \item $\inn \attr = \es$, in which case there is a self-similar tiling of any feasible open set $O$ of \attr, or
      \item $\inn \attr \neq \es$, in which case \attr gives a self-similar tiling of \bRd.
    \end{enumerate}
    \begin{proof}
      (i) is a corollary of Theorem~\ref{thm:generaltiling};
      (ii) is \cite[Thm.~9.1]{Bandt97} with unit tile \attr.
    \end{proof}
  \end{prop}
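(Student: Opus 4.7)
The dichotomy is just a case split on whether or not $\inn \attr$ is empty, so there is nothing to prove about exhaustiveness. The work consists of unpacking each branch and citing the appropriate existing result.

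For (i), assume $\inn \attr = \es$ and let $O$ be any feasible open set for \attr (which exists by OSC). The plan is simply to invoke Theorem~\ref{thm:generaltiling}, whose hypotheses are precisely that \attr satisfies OSC and has empty interior; the conclusion is that $\tiling(O) = \{\simt_w(G_q) \suth w \in W,\, q \in Q\}$ is an open tiling of $\ghull = \cj{O}$, and this is exactly the self-similar tiling of the feasible open set asked for in the statement.

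For (ii), assume $\inn \attr \neq \es$. By Corollary~\ref{cor:OSC-dimension-d-implies-trivial} (or, more directly, by Schief's theorem \cite[Cor.~2.3]{Schi}), this forces $\dim_H \attr = d$ for a self-similar \attr satisfying OSC. Under this condition, \cite[Thm.~9.1]{Bandt97} provides a self-similar tiling of \bRd whose unit tile is \attr itself; one applies that theorem as a black box.

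The only real content, therefore, is to verify that the hypothesis format of \cite[Thm.~9.1]{Bandt97} matches what we have (self-similar, OSC, and full-dimensional attractor, so that tiles can be assembled to cover \bRd up to a measure-zero boundary set). Once that matching is done, the rest is a straightforward two-line appeal to Theorem~\ref{thm:generaltiling} and to Bandt. No obstacle is anticipated beyond this bookkeeping.
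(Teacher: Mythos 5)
Your proposal matches the paper's own proof: case (i) is exactly the cited application of Theorem~\ref{thm:generaltiling}, and case (ii) is the appeal to \cite[Thm.~9.1]{Bandt97} with unit tile \attr. The extra remark that nonempty interior forces $\dim_H \attr = d$ via Schief is harmless bookkeeping that the paper leaves implicit.
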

  The dichotomy of Proposition~\ref{thm:tiling-dichotomy} extends to the self-affine case, see \cite[Thm.~1.2]{LaWa} and \cite[Lemma~2.3]{NgaiTang05} for the case (ii). The latter result is formulated for subsets of $\R^2$ (and more general contractions) but the same argument works in $\R^d$.
  %
\end{remark}

 \bibliographystyle{plain}

\begin{thebibliography}{3ex}

  \bibitem{Bandt97}
  \bibart{C. Bandt}
    {Self-similar tilings and patterns described by mappings, in}
    {The mathematics of long-range aperiodic order
      ({W}aterloo, {ON}, 1995), \emph{NATO Adv. Sci. Inst. Ser. C Math. Phys. Sci.}}
    {}{489}{1997}{45--83}

  \pgap[\bibgap]

  \bibitem{Bandt:OnOSC}
  \bibart{C. Bandt, N. Hung and H. Rao}
    {On the open set condition for self-similar fractals}
    {Proc. Amer. Math. Soc.}
    {(5)}{134}{2005}{1369--1374}

  \pgap[\bibgap]

  \bibitem{Fal1}
  \bibbook{K. J. Falconer}
    {Fractal Geometry --- Mathematical Foundations and Applications}
    {John Wiley}
    {Chichester}
    {1990}

  \pgap[\bibgap]

  \bibitem{Fed}
  \bibart{H. Federer}
    {Curvature measures}
    {Trans. Amer. Math. Soc.}
    {}{93}{1959}{418--491}

  \pgap[\bibgap]

  \bibitem{Gr}
  \bibbook{A. Gray}
    {Tubes \emph{(2nd ed.)}}
    {Progress in Math., vol. 221, Birkh\"{a}user}
    {Boston}
    {2004}

  \pgap[\bibgap]



  \bibitem{Hut}
  \bibart{J. E. Hutchinson}
    {Fractals and self-similarity}
    {Indiana Univ. Math. J.}
    {}{30}{1981}{713--747}

  \pgap[\bibgap]

  \bibitem{LaWa}
  \bibart{J. C. Lagarias and Y. Wang}
    {Self-affine Tiles in \bRn}
    {Adv. in Math.}
    {}{121}{1996}{21--49}

  \pgap[\bibgap]

  \bibitem{TFCD}
  {M. L. Lapidus and E. P. J. Pearse,}
    {Tube formulas and complex dimensions of self-similar tilings.}
    {\textit{in review. 41 pages.}}
    { arXiv: \texttt{math.DS/0605527}}

  \pgap[\bibgap]

  \bibitem{FractalTubeSurvey}
  {M. L. Lapidus and E. P. J. Pearse,}
    {Tube formulas of self-similar fractals, in \emph{Analysis on Graphs and Its Applications}, Proc. of Symposia in Pure Mathematics vol. 77, eds. P. Exner, J. P. Keating, C. Bristol, P. Kuchment, T. Sunada, and A. Teplyaev. Amer. Math. Soc., Providence, RI, 2008.}

  \pgap[\bibgap]

  \bibitem{Pointwise}
  {M. L. Lapidus, E. P. J. Pearse and S. Winter,}
    {Pointwise and distributional tube formulas for fractal sprays with Steiner-like generators,}
    {\emph{in preparation.}}

  \pgap[\bibgap]

%



  \bibitem{FGCD}
  \bibbook{M. L. Lapidus and M. van Frankenhuijsen}
    {Fractal Geometry, Complex Dimensions and Zeta Functions: Geometry and spectra of fractal strings}
    {Springer Monographs in Mathematics, Springer-Verlag}
    {New York}
    {2006}

  \pgap[\bibgap]

  \bibitem{LlorWin}
  \bibart{M. Llorente, S. Winter}
     {A notion of Euler characteristic for fractals}
     {Math. Nachr. }
     {}{280}{2007}{152--170}

  \pgap[\bibgap]

  \bibitem{NgaiTang05}
  \bibart{S.-M. Ngai and T.-M. Tang}
    {Topology of connected self-similar tiles in the plane with disconnected interiors}
    {Topo. Appl.}
    {}{150}{2005}{139--155}

\pgap[\bibgap]

\bibitem{Thesis}
{E. P. J. Pearse,}
  {Complex dimensions of self-similar systems,}
  {Ph. D. Dissertation,}
  {University of California, Riverside, June 2006.}

  \pgap[\bibgap]

  \bibitem{SST}
  \bibart{E. P. J. Pearse}
    {Canonical self-affine tilings by iterated function systems}
    {Indiana Univ. Math J.}
    {}{56}{2007}{no. 6, 3151--3169}
    { arXiv: \texttt{math.MG/0606111}}

  \pgap[\bibgap]

  \bibitem{Schn}
  \bibbook{R. Schneider}
    {Convex Bodies: The Brunn--Minkowski Theory}
    {Cambridge Univ. Press}
    {Cambridge}
    {1993}

  \pgap[\bibgap]

  \bibitem{Schi}
  \bibart{A. Schief}
    {Separation properties for self-similar sets}
    {Proc. Amer. Math. Soc.}
    {}{122}{1994}{no.~1, 111--115}

  \pgap[\bibgap]

  \bibitem{GAST1}
  \bibart{R. S. Strichartz and Y. Wang}
    {Geometry of self-affine tiles I}
    {Indiana Univ. Math. J.}
    {1}{48}{1999}{1--23}

  \pgap[\bibgap]

   \bibitem{We}
   \bibart{H. Weyl}
     {On the volume of tubes}
     {Amer. J. Math.}
     {}{61}{1939}{461--472}

  \pgap[\bibgap]

  \bibitem{SteffenThesis}
  \bibart{S. Winter}
    {Curvature measures and fractals}
    {Diss. Math.}
    {}{453}{2008}{66pp}



  \pgap[\bibgap]

  \bibitem{Za2}
  \bibart{M. Z\"{a}hle}
     {Curvatures and currents for unions of sets with positive reach}
     {Geom. Dedicata}
     {}{23}{1987}{155--171}


\end{thebibliography}


\end{document}